\newcommand{\langue}{anglais}	
	\newcommand{\theoremenom}{Théorème}
	\newcommand{\propositionnom}{Proposition}
	\newcommand{\lemmenom}{Lemme}
	\newcommand{\corollairenom}{Corollaire}
	\newcommand{\definitionnom}{Définition}
	\newcommand{\remarquenom}{Remarque}
	\newcommand{\exemplenom}{Exemple}
	\newcommand{\conjecturenom}{Conjecture}
	\newcommand{\theoremenom}{Theorem}
	\newcommand{\propositionnom}{Proposition}
	\newcommand{\lemmenom}{Lemma}
	\newcommand{\corollairenom}{Corollary}
	\newcommand{\definitionnom}{Definition}
	\newcommand{\remarquenom}{Remark}
	\newcommand{\exemplenom}{Example}
	\newcommand{\conjecturenom}{Conjecture}
\newtheorem{theoreme}{\theoremenom}
\newtheorem{proposition}[theoreme]{\propositionnom}
\newtheorem{lemme}[theoreme]{\lemmenom}
\newtheorem{definition}[theoreme]{\definitionnom}
\newtheorem{remarque}[theoreme]{\remarquenom}
\def\cleartheorem#1{%
    \expandafter\let\csname#1\endcsname\relax
    \expandafter\let\csname c@#1\endcsname\relax
}
\newcommand{\compteurThm}{1}
\newcounter{annexe}
\newcommand{\R}{\mathbb{R}}
\newcommand{\N}{\mathbb{N}}
\begin{document}

\pagestyle{empty} 


\title{Existence of an optimal shape for the first eigenvalue of polyharmonic operators}
\author{
Roméo Leylekian
\footnote{Aix-Marseille Université, CNRS, I2M, Marseille, France - \textbf{email:} romeo.leylekian@univ-amu.fr}
}
\date{} 
\maketitle


\begin{abstract}
We prove the existence of an open set minimizing the first eigenvalue of the Dirichlet polylaplacian of order $m\geq1$ under volume constraint. Moreover, the corresponding eigenfunction is shown to enjoy $C^{m-1,\alpha}$ Hölder regularity. This is performed for dimension $2\leq d\leq 4m$. In particular, our analysis answers the question of the existence of an optimal shape for the clamped plate up to dimension $8$.
\end{abstract}

\noindent
\textbf{Acknowledgements.}
Let me thank Dorin Bucur for suggestions about constructing an open optimal set. Thank you to Michel Pierre and to Antoine Lemenant for interesting discussions and references on capacities of higher order. Lastly, I would like to thank Enea Parini and François Hamel for their help and their useful remarks.



\pagestyle{plain} 


\section{Introduction}

The purpose of this document is to prove the existence and the partial regularity of an optimal shape for the first eigenvalue of Dirichlet polylaplacians with volume constraint. More precisely, for some $m\in\N^*$ (fixed from now on), we consider the following eigenvalue problem:
\begin{equation}\label{eq:equation vp}
\left\{
\begin{array}{rcll}
(-\Delta)^m u & = & \Gamma u & \text{in }\Omega, \\
\partial_n^i u & = & 0 & \text{on }\partial\Omega, \text{ for all }0\leq i<m,
\end{array}
\right.
\end{equation}
where $\Omega$ is an open subset of finite volume of $\R^d$ ($d\in\N^*$ is fixed as well), $\partial_n=\nabla\cdot\vec{n}$ is the outward normal derivative, $\Gamma$ is a real number, and $u:\Omega\to\R$ is nontrivial and belongs to the Sobolev space $H_0^m(\Omega)$. It is known that there exists an unbounded sequence of positive such eigenvalues $\Gamma$. Here, we only focus on the lowest one, that we denote $\Gamma(\Omega)$. Then, we are interested in solving
\begin{equation}\label{eq:pb}
\min\{\Gamma(\Omega):\Omega\subseteq\R^d\text{ open set, }|\Omega|=c\},
\end{equation}
where $|\cdot|$ is the Lebesgue measure, and $c$ is a given positive real number. For $m=1$, the Faber-Krahn inequality \cite{faber,krahn} shows that (\ref{eq:pb}) admits the ball as a solution. For $m=2$, in dimension $d=2,3$, it has been proved in 1995 in \cite{nadirashvili} and \cite{ashbaugh-benguria} that the ball also solves (\ref{eq:pb}). Unfortunately, the method employed does not give the result when $d\geq4$. For $m\geq3$, it seems that nothing is known about the solution of (\ref{eq:pb}) in any dimension (except dimension $1$). In this context, it is a natural question to ask whether there is at least one solution to (\ref{eq:pb}). We will give a positive answer to this question for a certain range of dimensions by proving the following.

\begin{theoreme}\label{thm:existence et regularite}
Let $2\leq d\leq 4m$. Problem (\ref{eq:pb}) admits a solution $\Omega$. Moreover, any corresponding eigenfunction extended by $0$ ouside $\Omega$ is $C^{m-1,\alpha}(\R^d)$ for all\/ $0<\alpha<1$.
\end{theoreme}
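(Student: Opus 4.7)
My plan is to transform the shape-optimization problem into a minimization on Sobolev functions. Set
$$\gamma(c) := \inf\left\{\int_{\R^d}|\nabla^m u|^2 \;:\; u \in H^m(\R^d),\; \int_{\R^d} u^2 = 1,\; |\{u\neq 0\}|\leq c\right\}.$$
Any minimizer $u$ will produce a candidate $\Omega := \{u \neq 0\}$ for (\ref{eq:pb}), and conversely eigenfunctions of admissible domains are admissible test functions, so $\gamma(c)$ coincides with the infimum in (\ref{eq:pb}). The scaling $\Gamma(t\Omega) = t^{-2m}\Gamma(\Omega)$ forces $|\{u \neq 0\}| = c$ at any minimizer by rescaling. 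The proof then proceeds in three steps: (i) extract a convergent minimizing sequence via concentration-compactness; (ii) identify the limit as a minimizer; (iii) bootstrap Hölder regularity of the extension by zero.

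\textbf{Existence via concentration-compactness.} Let $(u_n)$ be a minimizing sequence, bounded in $H^m(\R^d)$ by the choice of normalization. I would apply Lions' concentration-compactness principle to the probability measures $\rho_n := u_n^2$. \emph{Vanishing} cannot occur: if $u_n \to 0$ in some $L^p(\R^d)$ with $p > 2$ in the Sobolev range of $H^m$, Hölder's inequality and the volume constraint give
$$1 = \int u_n^2 \leq |\{u_n \neq 0\}|^{1 - 2/p} \|u_n\|_p^2 \leq c^{1 - 2/p} \|u_n\|_p^2 \longrightarrow 0,$$
a contradiction. \emph{Dichotomy} is excluded by the strict monotonicity $\gamma(c) = \gamma(1)c^{-2m/d}$: a splitting of mass into two asymptotically separated packets of asymptotic volumes $c_1,c_2$ with $c_1+c_2 \leq c$ would yield
$$\liminf_n \int |\nabla^m u_n|^2 \geq \min(\gamma(c_1),\gamma(c_2)) > \gamma(c).$$
Hence, up to translations, compactness holds and, along a subsequence, $u_n \rightharpoonup u$ weakly in $H^m(\R^d)$ and strongly in $L^2(\R^d)$, hence a.e. Since $\{u \neq 0\} \subseteq \liminf_n \{u_n \neq 0\}$, Fatou gives $|\{u \neq 0\}| \leq c$, while weak lower semicontinuity of the Dirichlet energy combined with the rescaling argument completes the extraction of a minimizer $u$ with $|\{u \neq 0\}| = c$, so that $\Omega := \{u \neq 0\}$ solves (\ref{eq:pb}).

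\textbf{Regularity.} The eigenfunction $u$ satisfies $(-\Delta)^m u = \Gamma u$ weakly inside $\Omega$. The first task is to promote this to the global distributional identity
$$(-\Delta)^m u = \Gamma u \mathbb{1}_\Omega \quad \text{in } \mathcal{D}'(\R^d),$$
which follows from the vanishing of the traces $u, \partial_n u, \ldots, \partial_n^{m-1} u$ on $\partial \Omega$ built into the $H^m_0(\Omega)$-membership, making all boundary contributions in the polyharmonic integration-by-parts formula disappear. Elliptic regularity then yields $u \in H^{2m}(\R^d)$. The dimensional restriction $d \leq 4m$ now becomes crucial: Sobolev embedding gives $H^{2m}(\R^d) \hookrightarrow L^q(\R^d)$ for every $q < \infty$ (and $L^\infty$ when $d < 4m$). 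Reinjecting into the equation, the right-hand side $\Gamma u \mathbb{1}_\Omega$ lies in every $L^q$; Calderón--Zygmund bootstrap then yields $u \in W^{2m,p}(\R^d)$ for every $p < \infty$, and Morrey's embedding gives $u \in C^{m-1,\alpha}(\R^d)$ for every $\alpha \in (0,1)$.

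\textbf{Main obstacle.} The hardest step will be the regularity part, and more specifically the passage from the eigenvalue equation on $\Omega$ to the global distributional identity on $\R^d$. For $m = 1$ this is automatic from $u \in H^1_0(\Omega)$, but for $m \geq 2$ the integration by parts on $\Omega$ of the polyharmonic operator generates boundary contributions involving derivatives of $u$ up to order $2m-1$, which must be shown to vanish using only the weak $H^m_0(\Omega)$-membership, without any regularity assumption on $\partial \Omega$. The absence of a maximum principle for $(-\Delta)^m$ when $m \geq 2$ also prevents standard positivity-based arguments. Handling this is the crux, possibly via a careful variational argument exploiting the shape-optimality of $u$ beyond the eigenfunction equation alone; the subsequent Sobolev bootstrap is then straightforward within the range $d \leq 4m$.
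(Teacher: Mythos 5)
The existence half of your plan is essentially the paper's: concentration--compactness applied to a normalized minimizing sequence of eigenfunctions, vanishing excluded by the volume constraint, dichotomy excluded by strict subadditivity coming from the scaling $\Gamma(t\Omega)=t^{-2m}\Gamma(\Omega)$. Two caveats there. First, for $m\geq2$ the set $\{u\neq0\}$ is not the right candidate domain: membership in $H_0^m(\omega)$ requires $D^k u=0$ $(m-k)$-quasi-everywhere on $\omega^c$ for all $k<m$, not merely $u=0$ a.e., so one must take $\Omega:=\bigcup_{k=0}^{m-1}\{D^k u\neq0\}$ (which is only $1$-quasi-open a priori) and invoke the capacitary characterization of $H_0^m$; this is why the paper first proves existence in the class of $1$-quasi-open sets. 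Second, in the dichotomy step the splitting must be done so that $u_n^1,u_n^2$ remain admissible for domains of controlled volume and the cross terms in the $m$-th order derivatives of the cutoffs are controlled; this is routine but not free.

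The regularity step, however, contains a genuine error. The identity $(-\Delta)^m u=\Gamma u\1_\Omega$ in $\mathcal{D}'(\R^d)$ is false for the extension by zero: the weak eigenvalue equation only tests against $\varphi\in H_0^m(\Omega)$, and for test functions crossing $\partial\Omega$ the distribution $(-\Delta)^m u-\Gamma u\1_\Omega$ picks up terms supported on $\partial\Omega$ (already for $m=1$ and $\Omega$ a ball, $-\Delta u=\lambda u\1_\Omega-(\partial_n u)\,\mathcal{H}^{d-1}\llcorner\partial\Omega$). Consequently $u\notin H^{2m}(\R^d)$ in general, and your conclusion $u\in W^{2m,p}(\R^d)$ for all $p$ would give $C^{2m-1,\alpha}$ regularity, which is false even in the classical case $m=1$ where the zero extension is merely Lipschitz. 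What survives of the equation across $\partial\Omega$ is only a \emph{one-sided} estimate, and it comes from shape optimality rather than from the PDE: optimality of $\Omega$ under the volume constraint makes $\Omega$ a shape supersolution of $\Gamma+\Lambda|\cdot|$, hence $u$ a local quasi-minimizer of $J_f(v)=\frac12\int|D^mv|^2-\int fv$ with $f=\Gamma(\Omega)u$, which yields $\langle(-\Delta)^m u-f,\varphi\rangle\leq Cr^{d/2}\|D^m\varphi\|_{L^2(B_r)}$ for $\varphi\in H_0^m(B_r(x_0))$. From this one derives the Morrey-type decay $\int_{B_r}|D^mu|^2\leq Cr^{d-2+2\alpha}$ by comparison with local polyharmonic replacements and a Campanato iteration, giving exactly $C^{m-1,\alpha}$ and no more. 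The restriction $d\leq4m$ enters only to guarantee $f=\Gamma(\Omega)u\in L^{d/m}_{loc}$ via the Sobolev embedding of $H^m$, not to run a global bootstrap. Your proposal correctly identifies this passage as the crux but then asserts the wrong resolution of it.
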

\clearpage

\begin{remarque}\label{rmq:existence et regularite}
\begin{enumerate}
\item We do not claim anything on the boundedness of an optimal shape.
\item Even if the eigenfunction is somewhat regular, we do not claim any regularity on the optimal shape more than its openness.
\item The regularity of the eigenfunction is not sharp in general (see \cite[Theorem 6.1]{bucur-mazzoleni-pratelli}).
\item In the case $m=1$, the restriction $d\leq 4m$ can be removed (see also Remark \ref{rmq:regularite fonction propre}).
\item The result also holds when $d=1$, for which the problem is trivial. Indeed, any one-dimensional solution for \eqref{eq:pb} must be an interval (otherwise a competitor could be built from one of its connected components), but all the admissible intervals have equal eigenvalue. 
\end{enumerate}
\end{remarque}

The proof of Theorem \ref{thm:existence et regularite} will be divided into two steps, following the approach of \cite{stollenwerk_buck}. First, we prove the existence of an optimal shape in a wider class than the class of open sets. As we shall see, this part of the proof does not require the restriction $2\leq d\leq 4m$. Then, we show that the generalised optimal shape enjoys some regularity properties allowing to solve (\ref{eq:pb}). Let us begin with the existence of a generalised optimal shape.

\subsection{Existence of an optimal shape}

Several breakthroughs have been performed in the direction of exhibiting an optimal shape for (\ref{eq:pb}) or relaxed versions. We cite \cite{bucur-ashbaugh, stollenwerk_buck}, in which an analogous problem is tackled for the buckling bilaplace eigenvalue problem, using a concentration-compactness technique.
Another interesting result is \cite[Theorem 3.5]{bucur}, which establishes the existence of a \textit{quasi-open} optimal shape for a family of spectral optimization problems similar to (\ref{eq:pb}). However, the class of admissible shapes is restricted to some design region. Moreover, we emphasize that the definition chosen in \cite{bucur} for the Sobolev space $H_0^m(\Omega)$ does not match the classical one, unless $\Omega$ enjoys some regularity (see \cite[p. 109]{bucur} for details). As a consequence, even if the optimal shape would be open, it would not necessarily solve (\ref{eq:pb}).
In the same fashion, a similar result but with a different approach shall be found in \cite{stollenwerk_clamped}. There, the existence of an \textit{open} optimal shape for problem (\ref{eq:pb}) with $m=2$ in dimension $d\geq4$ is evidenced, but still among sets lying in a design region.

As a first step towards a proof of Theorem \ref{thm:existence et regularite}, we would like to provide an optimal shape for (\ref{eq:pb}) in the wider class of \textit{quasi-open sets}. Even if we need preliminary notions to understand it correctly, let us give a precise statement for our first main result.

\begin{theoreme}\label{thm:existence d'une forme optimale}
Let $d\in\N^*$. The following problem admits a solution.
\begin{align}
&\min\{\Gamma(\Omega):\Omega\subseteq\R^d\text{, }\Omega\text{ 1-quasi-open, }|\Omega|=c\} \label{pb:Gamma}
\end{align}
\end{theoreme}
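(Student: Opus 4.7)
The plan is to run a concentration-compactness argument directly on a minimizing sequence, in the spirit of \cite{bucur-ashbaugh,stollenwerk_buck}. Denote by $\Gamma^\ast$ the infimum in (\ref{pb:Gamma}) and fix a minimizing sequence $(\Omega_n)$ of $1$-quasi-open sets with $|\Omega_n|=c$ and $\Gamma(\Omega_n)\to\Gamma^\ast$. Let $u_n\in H^m_0(\Omega_n)$ be the corresponding first eigenfunctions with $\|u_n\|_{L^2}=1$, extended by $0$ outside $\Omega_n$. The identity $\|D^m u_n\|_{L^2}^2=\Gamma(\Omega_n)$ shows $(u_n)$ is bounded in $H^m(\R^d)$, so up to a subsequence $u_n\rightharpoonup u$ weakly in $H^m$, a.e., and in $L^2_{\mathrm{loc}}$. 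Apply Lions' concentration-compactness principle to $u_n^2$, producing one of the three alternatives \emph{vanishing}, \emph{dichotomy}, or \emph{compactness up to translation}. Vanishing is immediate to exclude: it would give $u_n\to 0$ in $L^p(\R^d)$ for some $p>2$ in the Sobolev range of $H^m$, while the volume constraint and Hölder's inequality force $1=\|u_n\|_{L^2}^2\leq|\Omega_n|^{1-2/p}\|u_n\|_{L^p}^2\to 0$, absurd.

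In the compactness case, after choosing translations $y_n$ and replacing $(u_n,\Omega_n)$ by $(u_n(\cdot-y_n),\Omega_n-y_n)$, one has $u_n\to u$ strongly in $L^2(\R^d)$ with $\|u\|_{L^2}=1$ and, by weak lower semicontinuity, $\|D^m u\|_{L^2}^2\leq\Gamma^\ast$. The set $\Omega:=\{u\neq 0\}$ is $1$-quasi-open thanks to the $1$-quasi-continuous representative of $u\in H^1(\R^d)$; Fatou gives $|\Omega|\leq c$; and $u\in H^m_0(\Omega)$ because $u$ vanishes $1$-quasi-everywhere on $\Omega^c$. Hence $\Gamma(\Omega)\leq\|D^m u\|_{L^2}^2\leq\Gamma^\ast$. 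If $|\Omega|<c$, enlarging $\Omega$ by a disjoint open ball $B$ of volume $c-|\Omega|$ and using $u$ extended by $0$ on $B$ as a competitor yields $\Gamma(\Omega\sqcup B)\leq\Gamma^\ast$, producing the desired minimizer.

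The hard part is \emph{dichotomy}. One localises $u_n$ by a smooth cutoff at a growing scale $R_n\to\infty$, writing $u_n=v_n+w_n$ with essentially disjoint supports $A_n,B_n\subseteq\Omega_n$, $\|v_n\|_{L^2}^2\to\alpha$, $\|w_n\|_{L^2}^2\to 1-\alpha$, and $|A_n|+|B_n|\leq c+o(1)$. The delicate step is that $D^m$ does not commute with the cutoff, producing $m$-th order commutator terms that involve lower-order derivatives of $u_n$ supported on a transition annulus where $u_n\to 0$ in $L^2$; a Gagliardo-Nirenberg interpolation between this vanishing $L^2$ mass and the uniform $H^m$ bound controls these commutators as $o(1)$. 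Granting this,
\[
\Gamma(\Omega_n)\geq\|D^m v_n\|_{L^2}^2+\|D^m w_n\|_{L^2}^2-o(1)\geq\alpha\,\Gamma^\ast_{|A_n|}+(1-\alpha)\,\Gamma^\ast_{|B_n|}-o(1),
\]
where $\Gamma^\ast_t:=\inf\{\Gamma(\Omega):\Omega\text{ 1-quasi-open},|\Omega|=t\}=(c/t)^{2m/d}\Gamma^\ast$ by the scaling $u\mapsto u(\cdot/\lambda)$. A Hölder-Sobolev estimate on $v_n,w_n$ bounds $|A_n|,|B_n|$ from below by a positive constant depending on $\alpha$ and the $H^m$ norm, so the limiting volumes $t_1,t_2>0$ satisfy $t_1+t_2\leq c$, in particular $t_i<c$; then $(c/t_i)^{2m/d}>1$ gives $\alpha\,\Gamma^\ast_{t_1}+(1-\alpha)\,\Gamma^\ast_{t_2}>\Gamma^\ast$, contradicting $\Gamma(\Omega_n)\to\Gamma^\ast$. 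This strict inequality, driven by the sublinear scaling exponent $2m/d$, is the mechanism that rules dichotomy out.
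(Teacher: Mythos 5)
Your overall architecture (concentration--compactness applied to the normalised first eigenfunctions, exclusion of vanishing and dichotomy, identification of the limit set from the limit function) is the same as the paper's, but there is a genuine gap, and it sits exactly at the point where the statement of the theorem is delicate. In the compactness case you set $\Omega:=\{u\neq0\}$ and assert that $u\in H_0^m(\Omega)$ \emph{because $u$ vanishes $1$-quasi-everywhere on $\Omega^c$}. For $m>1$ this is false: by the capacitary characterisation (\ref{eq:definition de H_0^m}), membership in $H_0^m(\Omega)$ requires $D^k u=0$ $(m-k)$-q.e.\ on $\Omega^c$ for \emph{every} $0\leq k<m$, and the vanishing of $u$ on $\Omega^c$ gives no control whatsoever on its derivatives there (think of $u$ behaving like $x_1$ across a piece of the hyperplane $\{x_1=0\}\subseteq\{u=0\}$: that piece has positive capacity and $\nabla u\neq0$ on it). The correct limit set is $\Omega:=\bigcup_{k=0}^{m-1}\{D^k u\neq0\}$, each $D^ku$ being taken in its $(m-k)$-quasi-continuous representative; then $u\in H_0^m(\Omega)$ by Lemma~\ref{lemme:construction d'un quasi-ouvert}, and this union is in general only $1$-quasi-open --- which is precisely why the theorem is stated in the class of $1$-quasi-open rather than $m$-quasi-open sets. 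With the corrected definition your volume estimate also needs upgrading: Fatou on $\{u_n\neq0\}$ no longer suffices, and one must obtain a.e.\ convergence of all intermediate derivatives $D^ku_n\to D^ku$ (which follows from strong $L^2$ convergence of $u_n$, the uniform $H^m$ bound and Gagliardo--Nirenberg interpolation) to conclude $|\Omega|\leq c$. The same bookkeeping infects your dichotomy step: to have $v_n\in H_0^m(A_n)$ with $A_n\subseteq\Omega_n$, the set $A_n$ must contain the non-vanishing sets of \emph{all} derivatives of $v_n$ up to order $m-1$, not merely the support of $v_n$.

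Two smaller points. Your exclusion of vanishing (Lions' $L^p$ decay plus H\"older against the volume constraint) is a legitimate and arguably cleaner alternative to the paper's route, which goes through a lemma of Bucur--Ashbaugh and the interpolation inequality (\ref{eq:poincaré}); note only that you must first justify the uniform $H^1$ bound on $u_n$ by interpolating between the $L^2$ normalisation and the $H^m$ bound. In the dichotomy step, your control of the $m$-th order commutators by interpolating the vanishing $L^2$ mass on the transition annulus against the $H^m$ bound is plausible but left vague; the paper avoids this by dilating the cutoffs at scale $R_n\to\infty$, so that every commutator term carries a factor $R_n^{-|\beta|}$ and is $O(1/R_n)$ outright.
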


\begin{remarque}
\begin{enumerate}
\item Note that the optimal shape need not to be bounded, we only know that it has bounded measure.
\item At first glance it should be suprising that for $m$ arbitrary, the appropriate class of quasi-open sets for getting existence is the class of\/ $1$-quasi-open sets instead of $m$-quasi-open sets. This will become clear as soon as we will understand how the optimal shape is constructed.
\end{enumerate}
\end{remarque}

The notion of quasi-open sets involved in the statement of Theorem \ref{thm:existence d'une forme optimale} will be investigated in section \ref{sec:preliminaires}. Then, the proof of Theorem \ref{thm:existence d'une forme optimale} is achieved in section \ref{subsec:preuve du theoreme d'existence}. However, without entering into details, we shall briefly describe the strategy of the proof: We follow \cite{bucur-ashbaugh}, and apply the concentration-compactness principle (see paragraph \ref{subsec:Concentration-Compactness principle}) directly to the first eigenfunctions of a minimizing sequence of open sets. We show that only compactness shall occur, hence we are able to extract a converging subsequence of first eigenfunctions. The optimal shape is identified as the set of points where the limit function or one of its derivatives up to order $m-1$ does not vanish. Such a set is only $1$-quasi-open in general, which explains why we needed to widen the class of admissible sets.

\subsection{Regularity of the optimal shape}

After providing an optimal shape among quasi-open sets, the next step in the proof of Theorem \ref{thm:existence et regularite} would be to show that this optimal set is actually open, and to give some hint on its regularity. A wide literature have been produced on this subject, especially in the case $m=1$ (see for instance \cite{bucur-mazzoleni-pratelli,bucur-buttazzo-velichkov} and the references given at the end of section \ref{sec:conclusion}). For the case $m=2$, the work \cite{stollenwerk_clamped}, following \cite{stollenwerk}, gives partial answers, but again with the assumption of a prescribed design region. This assumption was dropped successfully only in the case of the buckling bilaplacian \cite{stollenwerk_buck}. Therefore, as for the existence, the regularity of an optimal shape seems still open in full generality. Indeed, gaining regularity from scratch is a quite difficult task in general. As a consequence, we will not prove that any optimal shape for (\ref{pb:Gamma}) is a regular open set. Alternatively, in the wake of \cite{bucur-mazzoleni-pratelli, stollenwerk_buck}, we will obtain regularity for the first eigenfunction, and from this fact build an open set competing with the optimal quasi-open set of Theorem \ref{thm:existence d'une forme optimale}. More precisely, we will prove the following.

\begin{theoreme}\label{thm:regularite fonction propre}
Assume that $2\leq d\leq 4m$ and that there exists a measurable set\/ $\Omega$ solving
\begin{equation}\label{eq:pb mesurable}
\min\{\Gamma(\omega): \omega\subset \R^d\text{ measurable, }|\omega|=c\}.
\end{equation}
Then, any first eigenfunction over $\Omega$ extended by $0$ outside $\Omega$ is $C^{m-1,\alpha}(\R^d)$ for all $0<\alpha<1$. As a consequence, there exists an(other) optimal set $\tilde{\Omega}$ for (\ref{eq:pb mesurable}) which is open.
\end{theoreme}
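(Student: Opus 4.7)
The plan is to exploit the minimality of $\Omega$ among measurable sets of volume $c$ in order to show that the trivial extension $\tilde u$ of $u$ (by $0$ outside $\Omega$) satisfies the polyharmonic eigenvalue equation on all of $\R^d$, then bootstrap the resulting PDE to $C^{m-1,\alpha}$ regularity, and finally construct an open optimal set $\tilde\Omega$ as a superlevel set of $\tilde u$ and its derivatives.

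The first and hardest step is to derive a \emph{global} Euler--Lagrange equation for $\tilde u$. The minimality of $\Omega$ among measurable sets implies that $\tilde u$ minimizes the Rayleigh quotient $R(w) := \int|\nabla^m w|^2/\int w^2$ over $w \in H^m(\R^d)\setminus\{0\}$ with $|\{w \neq 0\}| \leq c$, because any such $w$ lies in $H_0^m$ of a measurable set of volume at most $c$. For $\varphi \in C_c^\infty(\R^d)$, set $w_\varepsilon := \tilde u + \varepsilon\varphi$; when $|\{w_\varepsilon \neq 0\}| > c$, rescale by $s_\varepsilon := (|\{w_\varepsilon \neq 0\}|/c)^{1/d}$ and replace $w_\varepsilon$ by $x \mapsto w_\varepsilon(s_\varepsilon x)$, whose support has measure $c$ and whose Rayleigh quotient equals $s_\varepsilon^{2m}R(w_\varepsilon)$. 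Combining the bound $|\{w_\varepsilon \neq 0\}| \leq c + |\supp\varphi \setminus \Omega|$ with a first-order expansion in $\varepsilon$ and the minimality of $R(\tilde u) = \Gamma$ readily yields $(-\Delta)^m \tilde u = \Gamma \tilde u$ distributionally on $\Omega$ when $\varphi \in C_c^\infty(\Omega)$ (where the defect term vanishes and both signs of $\varepsilon$ are allowed). Extending the equation to arbitrary $\varphi \in C_c^\infty(\R^d)$ requires absorbing $|\supp\varphi \setminus \Omega|$ in the variational inequality; I would approximate $\varphi$ by test functions supported in $\Omega$, exploiting the vanishing of $\nabla^i \tilde u$ quasi-everywhere on $\R^d \setminus \Omega$ for $i < m$.

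Once $(-\Delta)^m \tilde u = \Gamma \tilde u$ holds on $\R^d$, the regularity is a standard bootstrap. Sobolev embedding of $H^m(\R^d)$ gives $\tilde u \in L^{2^\star}$ with $2^\star := 2d/(d-2m)$ when $d > 2m$ (the case $d \leq 2m$ is already more favorable); elliptic regularity for the polyharmonic operator then places $\tilde u \in W^{2m,2^\star}_{\mathrm{loc}}$, and under the hypothesis $d \leq 4m$ a finite number of iterations $p \mapsto p^\star$ puts $\tilde u$ in $L^\infty_{\mathrm{loc}}$; one further application of the equation gives $\tilde u \in W^{2m,p}_{\mathrm{loc}}$ for every $p < \infty$, and Morrey's embedding concludes $\tilde u \in C^{m-1,\alpha}(\R^d)$ for all $\alpha < 1$. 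With this continuity, the set $\tilde\Omega_0 := \{x \in \R^d : \sum_{i<m}|\nabla^i \tilde u(x)| > 0\}$ is open, and $\tilde u \in H_0^m(\tilde\Omega_0)$ because all lower-order derivatives vanish by continuity on $\partial\tilde\Omega_0$. If $|\tilde\Omega_0| < c$, one enlarges by a disjoint open set to reach exactly volume $c$; the resulting $\tilde\Omega$ satisfies $\Gamma \leq \Gamma(\tilde\Omega) \leq R(\tilde u) = \Gamma$ and is therefore an open minimizer. The main obstacle is clearly the first step: extending the Euler--Lagrange equation from $\Omega$ to $\R^d$ despite the defect $|\supp\varphi \setminus \Omega|$, which does not tend to zero with $\varepsilon$ and must be handled by a careful localization argument rather than a direct calculation.
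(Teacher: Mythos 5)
Your first step---deriving the global equation $(-\Delta)^m\tilde u=\Gamma\tilde u$ on all of $\R^d$---is not merely ``the main obstacle'': it is false, so no localization argument can rescue it. Solutions of $(-\Delta)^m v=\Gamma v$ on an open set are real-analytic (constant-coefficient elliptic operator), so if the equation held distributionally on $\R^d$ then $\tilde u$ would be real-analytic on $\R^d$; since $\tilde u$ vanishes on $\R^d\setminus\Omega$, a set of infinite measure, and the zero set of a nontrivial real-analytic function is Lebesgue-null, this would force $\tilde u\equiv0$. What the minimality of $\Omega$ actually yields is only a one-sided, penalized inequality: perturbing inside a ball $B_r(x_0)$ and rescaling to restore the volume constraint leaves an error proportional to $|B_r(x_0)\setminus\Omega|=O(r^d)$ which does \emph{not} tend to zero; this is precisely the quasi-minimality property $\langle(-\Delta)^m u-\Gamma u,\varphi\rangle\leq Cr^{d/2}\|D^m\varphi\|_{L^2(B_r(x_0))}$ of Lemma \ref{lemme:quasi-minimalité}, i.e.\ $(-\Delta)^m\tilde u-\Gamma\tilde u$ is a nontrivial distribution carried by $\partial\Omega$. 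Consequently the ``standard bootstrap'' of your second step is unavailable: there is no PDE across $\partial\Omega$ to bootstrap, and interior regularity alone cannot give $C^{m-1,\alpha}(\R^d)$ for the extension by zero.

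The paper's route replaces both of your steps. It first shows that $\Omega$ is a shape supersolution of $\Gamma+\Lambda|\cdot|$ (by dilation and the homogeneity (\ref{eq:homogénéité})), deduces that $u$ is a local quasi-minimizer of the energy $J_f$ with $f=\Gamma(\Omega)u$, and then obtains the H\"older regularity not from elliptic regularity but from a Campanato-type decay estimate $\int_{B_r(x_0)}|D^mu|^2\leq Cr^{d-2+2\alpha}$ (Theorem \ref{thm:regularite quasi-min}), proved by comparison with polyharmonic replacements on balls and concluded via Morrey's theorem. The hypothesis $d\leq4m$ enters exactly there, to guarantee $f=\Gamma(\Omega)u\in L_{loc}^{d/m}$ by Sobolev embedding; your bootstrap gives this hypothesis no natural role, which is another sign the approach is off. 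Your final construction of $\tilde\Omega=\bigcup_{k<m}\{D^k\tilde u\neq0\}$ does coincide with the paper's, except that the volume constraint is checked by the quasi-everywhere inclusion $\tilde\Omega\subseteq\Omega$ (giving $|\tilde\Omega|\leq|\Omega|$, then rescaling), rather than by adjoining a disjoint open set.
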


\begin{remarque}\label{rmq:regularite fonction propre}
\begin{enumerate}
\item The regularity which is obtained for the eigenfunction is not sharp in general (see \cite[Theorem 6.1]{bucur-mazzoleni-pratelli}).
\item The restriction $d\leq 4m$ comes from the fact that Theorem \ref{thm:regularite quasi-min} is used with $f=\Gamma(\Omega) u$ which needs to belong to $L_{loc}^{p}(\R^d)$ for some $p\geq d/m$ (here $u$ is an eigenfunction over $\Omega$). When $d\leq 4m$, this holds by Sobolev embeddings. In dimension greater than $4m$, such an integrability of $u$ is known only when $\Omega$ is already very regular (for instance $C^{2m}$ by elliptic regularity \cite[Theorem~2.20]{gazzola-grunau-sweers} and bootstrap arguments). In the special case $m=1$, this condition on $u$ is always automatically fulfilled since the eigenfunctions are bounded \cite[Example 2.1.8]{davies}. For $m>1$, it would be of interest to study the boundedness of the eigenfunctions in measurable sets or even in open sets with regularity lower than $C^{2m}$.
\end{enumerate}
\end{remarque}

The proof of Theorem \ref{thm:regularite fonction propre} is given in section \ref{subsec:preuve du theoreme de regularite}. The strategy for proving $C^{m-1,\alpha}$ regularity for an eigenfunction $u$ over $\Omega$ is to resort to Morrey's Theorem (see \cite[Chapter III, Theorem~1.1]{giaquinta}) which requires some control on the growth of the $L^2$ norm of the derivatives of $u$ on small balls. As shown in the proof of Theorem \ref{thm:regularite quasi-min}, such a control is precisely a crucial property enjoyed by quasi-minimizers of Dirichlet energy functionals (see paragraph \ref{subsec:quasi-min} for the definition and first properties of quasi-minimizers). As a consequence, one boils down to showing that $u$ is a quasi-minimizer of some Dirichlet energy functional, which will come from the optimality of $\Omega$. After proving the regularity of the eigenfunction, we will define the competitor $\tilde{\Omega}$ as the set of points where $u$ or one of its derivatives up to order $m-1$ does not vanish.

\subsection{Structure of the paper}

The document is structured as follows. Section \ref{sec:preliminaires} contains preliminaries. In particular the notions of capacities, quasi-open sets, and related concepts are recalled. Then, existence results are addressed in section \ref{sec:existence d'une forme optimale}. The method of concentration-compactness adapted to our situation is presented in paragraph \ref{subsec:Concentration-Compactness principle}, after what the proof of Theorem \ref{thm:existence d'une forme optimale} is achieved in paragraph \ref{subsec:preuve du theoreme d'existence}. Section \ref{sec:regularite} deals with regularity results. Local quasi-minimizers are introduced and briefly studied in paragraph \ref{subsec:quasi-min} and the proof of Theorem \ref{thm:regularite fonction propre} is finally performed in paragraph \ref{subsec:preuve du theoreme de regularite}. Section \ref{sec:conclusion} concludes, providing the proof of Theorem \ref{thm:existence et regularite} and discussing some perspectives.

\section{Preliminaries}\label{sec:preliminaires}

We recall that if $\Omega$ is an open set of bounded measure, $H_0^m(\Omega)$ is defined as the closure of $C_c^\infty(\Omega)$ in the Sobolev space $H^m(\Omega)$, and the eigenvalue $\Gamma(\Omega)$ is then characterized variationally by the formula
\begin{equation}\label{eq:Gamma}
\Gamma(\Omega)=\min_{\substack{v\in H_0^m(\Omega)\\v\neq0}}\frac{\int_\Omega|D^m v|^2}{\int_\Omega v^2}.
\end{equation}
Here, $D^m v$ denotes the differential of order $m$ of $v$. Its Euclidean norm $|D^mv|^2=D^mv:D^mv$ is inherited from the Euclidean scalar product denoted
$$
D^m v:D^mw=\sum_{i\in\{1,...,d\}^m}\partial_iv\partial_iw=\sum_{\substack{\gamma\in\N^d, \\ |\gamma|=m}}^d\frac{m!}{\gamma !}\partial^\gamma v\partial^\gamma w,
$$
where for $i=(i_1,...,i_m)\in\{1,...,d\}^m$ we write $\partial_i v=\partial_{i_1}...\partial_{i_m}v$, whereas for $\gamma=(\gamma_1,...,\gamma_d)\in\N^d$ we write $\partial^\gamma v=\partial_1^{\gamma_1}...\partial_d^{\gamma_d}v$ and $\gamma!=\gamma_1!...\gamma_d!$.
It can be shown, thanks to the compact embedding $H_0^m(\Omega)\hookrightarrow L^2(\Omega)$, that the minimum of (\ref{eq:Gamma}) is achieved for some function $u$. Then, $u$ is a first eigenfunction, that is $(u,\Gamma(\Omega))$ is a distributional solution to (\ref{eq:equation vp}). This last point is a consequence of the integration by parts formula,
\begin{equation}\label{eq:ipp}
\int_\Omega \left((-\Delta)^mu\right)v=\int_\Omega D^m u:D^mv,\qquad \forall u,v\in C_c^\infty(\Omega).
\end{equation}

As explained in the introduction, we will need to define the first eigenvalue on sets which are not necessarily open. To this end, we will use the right-hand side of formula (\ref{eq:Gamma}) as a definition, hence we need to investigate the functional space $H_0^m(\Omega)$ in the case where $\Omega$ is a general set. This involves the notions of capacities of orders $1$ up to $m$. For the sake of brevity, we will use the prefix $k$ when talking about capacity of order $k$ and related notions.

We refer to \cite[section 3.3]{henrot-pierre} for the definition and properties of $1$-capacity and $1$-quasi-open sets. We recall that a $1$-quasi-continuous function is a function whose preimage of any open set is a $1$-quasi-open set, and that any function in $H^1(\R^d)$ has a $1$-quasi-continuous representative uniquely defined $1$-quasi everywhere ($1$-q.e. in short, which means up to sets of null $1$-capacity). In the following, we shall always identify $H^1$ functions with their $1$-quasi-continuous representative. Then, Theorem 3.3.42 of \cite{henrot-pierre} shows that, when $\Omega$ is open,
\begin{equation}\label{eq:definition de H_0^1}
H_0^1(\Omega)=\{u\in H^1(\R^d):u=0\text{ 1-q.e. in }\Omega^c\}.
\end{equation}
In the same fashion, one may define an $m$-capacity and the corresponding notions of $m$-quasi-open sets and $m$-quasi-continuous functions (see \cite{adams-hedberg} for a very general theory of capacities). Formally, for $m\in\N$ and a set $E\subseteq\R^d$, the $m$-capacity of $E$, denoted $cap_m(E)$, is given by
$$
cap_m(E):=\inf\{\|v\|_{H^m(\R^d)}^2:\text{ $v\in H^m(\R^d)$, $v\geq1$ a.e. on a neighbourhood of $E$}\}.
$$
Actually, this is not the traditional definition, but an equivalent characterization (see \cite[Proposition 3.3.5]{henrot-pierre}). For our purpose, it is enough to think of the $m$-capacity as an outer measure over $\R^d$ which is finer than the $(m-1)$-capacity in the sense that $cap_{m}(E)=0\Rightarrow cap_{m-1}(E)=0$ for any $E\subseteq\R^d$. Then, $m$-quasi-open sets are defined to be sets which are open up to a residue of arbitrary small $m$-capacity. Lastly, $m$-quasi-continuous functions are functions whose preimages of open sets are $m$-quasi-open sets. In other words,
\begin{definition}
A set $E$ is $m$-quasi-open if for any $\epsilon>0$, there is $\omega\subseteq\R^d$ such that $E\cup\omega$ is open and $cap_m(\omega)<\epsilon$. A function $f:\R^d\to\R$ is $m$-quasi-continuous if for any open set $U\subseteq\R$, $f^{-1}(U)$ is $m$-quasi-open.
\end{definition}
Moreover, any $u\in H^m(\R^d)$ has an $m$-quasi-continuous representative defined $m$-q.e. (that is up to sets of null $m$-capacity), and we will always identify $u$ with its representative. When $\Omega$ is open, the space $H_0^m(\Omega)$ shall be characterised thanks to the capacities up to order $m$ in the following way (see for instance \cite[p.2]{grosjean-lemenant-mougenot}):
\begin{equation}\label{eq:definition de H_0^m}
H_0^m(\Omega)=\{u\in H^m(\R^d):D^k u=0\text{ }(m-k)\text{-q.e. in }\Omega^c\text{, for }0\leq k<m\}.
\end{equation}
Let us point out that in general $H_0^m(\Omega)$ does not coincide with the set of functions in $H^m(\R^d)$ vanishing a.e. outside $\Omega$. Indeed, such a property requires some regularity on $\Omega$ (see \cite{grosjean-lemenant-mougenot}). Observe that as the right-hand member of (\ref{eq:definition de H_0^m}) makes sense regardless of the regularity of $\Omega$, we shall use it to define $H_0^m(\Omega)$ even when $\Omega$ is not open. Moreover, the compact injection
\begin{equation}\label{eq:injection compact}
H_0^m(\Omega)\hookrightarrow L^2(\Omega)
\end{equation}
holding when $\Omega$ was open, remains when $\Omega$ is only a measurable set of finite measure, as explained below.

\begin{definition}
Let $\Omega$ be a subset of $\R^d$. Then, $H_0^m(\Omega)$ is defined by (\ref{eq:definition de H_0^m}).
\end{definition}

\begin{lemme}\label{lemme:injection compacte}
Let $\Omega$ be a measurable set of finite measure. Then, injection (\ref{eq:injection compact}) is compact.
\end{lemme}

\begin{proof}
Let $(u_n)_n$ be a sequence in $H_0^m(\Omega)$. By outer regularity of the Lebesgue measure, there exists an open set $\tilde{\Omega}\supseteq \Omega$ of measure $|\Omega|+1<+\infty$. Then,
$$
H_0^m(\Omega)\subseteq H_0^m(\tilde{\Omega})\hookrightarrow L^2(\tilde{\Omega})
$$
is compact, so we shall extract from $(u_n)_n$ a subsequence converging in $L^2(\tilde{\Omega})$, hence in $L^2(\Omega)$.
\end{proof}

Lastly, as a consequence of Lemma \ref{lemme:injection compacte}, formula (\ref{eq:Gamma}) generalises to any measurable set of finite measure, and the minimum in (\ref{eq:Gamma}) is also achieved in this setting. Before going further, let us mention the following useful observation, which follows immediately from (\ref{eq:definition de H_0^m}).
 
 \begin{lemme}\label{lemme:construction d'un quasi-ouvert}
 Let $u\in H^m(\R^d)$. Then, $u\in H_0^m(\bigcup_{k=0}^{m-1}\{D^k u\neq 0\})$.
 \end{lemme}
 
 \begin{remarque}
 Although Lemma \ref{lemme:construction d'un quasi-ouvert} looks obvious in view of the previous definition of $H_0^m$, it has to be noted that the set $\omega:=\bigcup_{k=0}^{m-1}\{D^k u\neq 0\}$ depends upon the choice of a quasi-continuous representative not only for $u$ but also for its derivatives. Consequently, one may find a set $\tilde{\omega}$ coinciding with $\omega$ only $1$-q.e. and such that $u\in H_0^m(\tilde{\omega})$. This might appear paradoxical at first glance, since the assertion \enquote{$u\in H_0^m(\tilde{\omega})$ for all $\tilde{\omega}=\omega$ $1$-q.e.} is false when $m>1$. The nuance is that we only claim the existence of \textbf{some} $\tilde{\omega}=\omega$ $1$-q.e. such that $u\in H_0^m(\tilde{\omega})$. In other words, $\omega$ is defined more than just $1$-quasi everywhere.
 \end{remarque}

To conclude this section, let us explain the relevance of our procedure of relaxing the class of admissible domains for problem (\ref{eq:pb}). This is justified by the fact that minimizing $\Gamma$ among open sets, quasi-open sets, or even measurable sets does not change the value of the infimum, as stated in the next lemma. Its proof relies on the homogeneity of $\Gamma$, which reads, for any set $\Omega$,

\begin{equation}\label{eq:homogénéité}
\Gamma(\tau\Omega)=\tau^{-2m}\Gamma(\Omega),\qquad \forall \tau>0.
\end{equation}

\begin{lemme}\label{lemme:inf egaux}
For any $k\in\N$,
$$
\inf_{\substack{\Omega\text{ open}\\|\Omega|=c}}\Gamma(\Omega)=\inf_{\substack{\Omega\text{ k-quasi-open}\\|\Omega|=c}}\Gamma(\Omega)=\inf_{\substack{\Omega\text{ measurable}\\|\Omega|=c}}\Gamma(\Omega)>0.
$$
\end{lemme}

\begin{proof}
Clearly,
$$
\inf_{\substack{\Omega\text{ open}\\|\Omega|=c}}\Gamma(\Omega)\geq\inf_{\substack{\Omega\text{ k-quasi-open}\\|\Omega|=c}}\Gamma(\Omega)\geq\inf_{\substack{\Omega\text{ measurable}\\|\Omega|=c}}\Gamma(\Omega),
$$
since any open set is also quasi-open, and any quasi-open set is measurable. For the converse inequality, observe that if $\Omega$ is a measurable set of volume $c$, by outer regularity of the Lebesgue measure there exists for every $\epsilon>0$ an open set $\Omega_\epsilon$ of volume $c+\epsilon$ such that $\Omega\subseteq\Omega_\epsilon$, and hence $H_0^m(\Omega)\subseteq H_0^m(\Omega_\epsilon)$. Then, by (\ref{eq:homogénéité}),
$$
\Gamma(\Omega)\geq\Gamma(\Omega_\epsilon)\geq\left(\frac{c}{c+\epsilon}\right)^{\frac{2m}{d}}\inf_{\substack{\Omega\text{ open}\\|\Omega|=c}}\Gamma(\Omega),
$$
and we get the result by taking $\epsilon\to0$. In order to prove that the infimum does not vanish, one may consider an open set $\Omega$ and combine the classical Faber-Krahn inequality with the following one:
\begin{equation}\label{eq:borne inf}
\lambda(\Omega)^{m}\leq\Gamma(\Omega),
\end{equation}
where $\lambda(\Omega)$ is the first eigenvalue of the Laplacian, which is defined by taking $m=1$ in (\ref{eq:Gamma}). To prove (\ref{eq:borne inf}), we let $u\in H_0^m(\Omega)$ be an eigenfunction (of the polylaplacian of order $m$) normalised in $L^2(\Omega)$. After an integration by parts and two Cauchy-Schwarz inequalities, we observe that for any $1\leq k<m$,
$$
\left(\int_\Omega|D^k u|^2\right)^2\leq\int_\Omega|D^{k-1}u|^2\int_\Omega|D^{k+1}u|^2.
$$
In other words, denoting $v_k=\int_\Omega|D^k u|^2$, we have
$$
v_k^2\leq v_{k-1}v_{k+1}.
$$
As a consequence, we prove by recursion that $(v_k)_k$ obeys the rule
$$
v_{m-k}^{p_k}\leq v_{m-k-1}^{q_k} v_m,\qquad \forall 1\leq k<m,
$$
where $(p_k)_k$ and $(q_k)_k$ are defined by $(p_1,q_1)=(2,1)$, and for all $k\geq1$,
$$
p_{k+1}=2p_k-q_k,\qquad q_{k+1}=p_k.
$$
The couple $(p_k,q_k)_{k\geq1}$ forms a constant-recursive sequence of order $1$, the solution of which is $(p_k,q_k)=(k+1,k)$. Consequently,
\begin{equation}\label{eq:poincaré}
v_1^{m}\leq v_0^{m-1}v_m,
\end{equation}
which gives the result since $u\in H_0^1(\Omega)$ is normalised in $L^2$.
\end{proof}

\section{Existence of an optimal shape for \texorpdfstring{$\Gamma$}{Gamma}}\label{sec:existence d'une forme optimale}

\subsection{Concentration-Compactness principle}\label{subsec:Concentration-Compactness principle}

As mentioned in the introduction, the concentration-compactness principle is the main tool used for proving Theorem \ref{thm:existence d'une forme optimale}. In this paragraph, we adapt to our needs the standard concentration-compactness Lemma of Lions (see Lemma I.1 and III.1 of \cite{lions}). We will use the notation $\tau_y u$ to designate the translation of a given function $u\in H^m(\R^d)$ by a given point $y\in\R^d$, that is the function defined by $\tau_y u(x)=u(x-y)$.

\begin{lemme}\label{lemme:lions}
Let $(u_n)_n$ be a bounded sequence of functions in $H^m(\R^d)$ such that $\|u_n\|_{L^2}^2\to\lambda>0$. Then, up to extracting a subsequence, one of the following occurs:
\begin{enumerate}[label=(\roman*)]
\item (compactness) there exists a sequence of points $(y_n)_n\subseteq\R^d$ such that
$$
\forall \epsilon>0,\exists R>0,\quad\liminf_{n\to\infty}\|\tau_{y_n}u_n\|_{L^2(B_R)}^2\geq\lambda-\epsilon.
$$
\item (vanishing)
$$
\forall R>0,\quad \lim_{n\to\infty}\sup_{y\in\R^d}\|\tau_yu_n\|_{L^2(B_R)}^2=0.
$$
\item (dichotomy) there exists $\alpha\in(0,\lambda)$ and functions $u_n^1$, and $u_n^2$ in $H^m(\R^d)$ such that
\begin{align*}
&\|u_n-(u_n^1+u_n^2)\|_{L^2}\to 0, \\
&\|u_n^1\|_{L^2}^2\to\alpha,\qquad \|u_n^2\|_{L^2}^2\to \lambda-\alpha, \\
&\liminf\left[\|D^m u_n\|_{L^2}^2-\|D^m u_n^1\|_{L^2}^2-\|D^m u_n^2\|_{L^2}^2\right]\geq 0.\\
\intertext{
Moreover, for any $0\leq k<m$, up to some sets of zero $(m-k)$-capacity,
}
&dist\left(\omega_{n,k}^{1},\omega_{n,k}^{2}\right)\to\infty,\\
&\omega_{n,k}^{1},\omega_{n,k}^{2}\subseteq \omega_{n,k},
\end{align*}
where $\omega_{n,k}:=\bigcup_{j=0}^k\{D^j u_n\neq0\}$ and $\omega_{n,k}^{i}:=\bigcup_{j=0}^k\{D^j u_n^i\neq0\}$ for $i=1,2$.
\end{enumerate}
\end{lemme}

\begin{proof}
We follow the proof of Lemma III.1 of \cite{lions} and define
$$
Q_n(R):=\sup_{y\in\R^d}\|\tau_y u_n\|_{L^2(B_R)}^2
$$
which induces a sequence of nonnegative nondecreasing uniformly bounded functions on $\R_+$ satisfying, for each $n\in\N$, $Q_n(R)\xrightarrow[R\to\infty]{}\lambda_n$, with $\lambda_n\to\lambda$. Then, by Helly's selection Theorem, up to extracting a subsequence, $Q_n$ pointwise converges to some function $Q$ on $\R_+$. Then $Q$, like $Q_n$, enjoys the properties of being nonnegative and nondecreasing. Furthermore $Q$ is bounded by $\lambda$. Now set
$$
\alpha := \lim_{R\to\infty}\uparrow Q(R)\in[0,\lambda].
$$
If $\alpha=0$, we are in the vanishing case, because for any $R$, $0\leq Q(R)\leq\alpha=0$, hence $Q(R)=0$ which is the result. If $\alpha=\lambda$, we are in case of compactness. Indeed, let $\epsilon>0$ and choose $r$ such that $\lambda-\epsilon<Q(r)$. Because $Q_n(r)\to Q(r)$, for all $n$ large enough, $\lambda-\epsilon<Q_n(r)$. Then, there exists $z_n\in\R^d$ such that $\lambda-\epsilon<\|\tau_{z_n} u_n\|_{L^2(B_r)}^2$. Note that the radius $r=r(\epsilon)$ and the translation points $z_n=z_n(\epsilon)$ depend on $\epsilon$. It remains to show that, up to increasing $r$, the points $z_n$ shall be replaced by points $y_n$ which are independent of $\epsilon$.

But as long as $\epsilon<\lambda/3$, the balls $B(z_n(\epsilon),r(\epsilon))$ and $B(z_n(\lambda/3),r(\lambda/3))$ intersect each other for all $n$ large enough, otherwise we would have, up to a subsequence,
$$
\|u_n\|_{L^2}^2\geq\|\tau_{z_n(\epsilon)} u_n\|_{L^2(B_{r(\epsilon)})}^2+\|\tau_{z_n(\lambda/3)} u_n\|_{L^2(B_{r(\lambda/3)})}^2>\lambda-\epsilon+\lambda-\lambda/3>4\lambda/3,
$$
which is impossible since $\|u_n\|_{L^2}^2\to\lambda$.
As a consequence, the distance between $z_n(\epsilon)$ and $z_n(\lambda/3)$ does not exceed $r(\epsilon)+r(\lambda/3)$. Then, setting $R(\epsilon):=r(\lambda/3)+2r(\epsilon)$, we get that $B(z_n(\epsilon),r(\epsilon))$ is included in $B(z_n(\lambda/3),R(\epsilon))$. Therefore, the sequence $y_n=z_n(\lambda/3)$ satisfies that for any $0<\epsilon<\lambda/3$, there exists $R:=R(\epsilon)$ such that for all $n$ large enough,
$$
\|\tau_{y_n} u_n\|_{L^2(B_R)}^2\geq\|\tau_{z_n(\epsilon)}u_n\|_{L^2(B_{r(\epsilon)})}^2>\lambda-\epsilon.
$$
This concludes the case of compactness. If now $\alpha\in(0,\lambda)$, let $\epsilon_n\to0$ and $R_n\to\infty$ such that $\forall n\in\N$, $\alpha\geq Q(R_n)>\alpha-\epsilon_n$. Because $Q_n\to Q$ once again, we may extract a subsequence $(u_n)_n$ and define a sequence $(y_n)_n$ such that, \mbox{$\forall n\in\N$, $\alpha-\epsilon_n<\|\tau_{y_n} u_n\|_{L^2(B_{R_n})}^2$}. In the next lines, we will build $u_n^1$ in a way that $\tau_{y_n}u_n^1=\tau_{y_n}u_n$ over $B_{R_n}$ and with support in $B_{2R_n}$. We will also build $u_n^2$ in a way that $\tau_{y_n}u_n^2$ is supported in the complement of $B_{3R_n}$ and equals $\tau_{y_n}u_n$ in the complement of $B_{6R_n}$. In this fashion, the supports of $\tau_{y_n}u_n^1$ and $\tau_{y_n}u_n^1$ will be separated by an annulus of width $R_n\to\infty$. Furthermore, in order to analyse the convergence of $u_n^1$ and $u_n^2$, we will need that $\|\tau_{y_n} u_n\|_{L^2(B_{2R_n})}^2<\alpha+\epsilon_n$ and also $\|\tau_{y_n} u_n\|_{L^2(B_{6R_n})}^2<\alpha+\epsilon_n$, which can be assumed without loss of generality since $Q_n\to Q\leq\alpha$.

Now, let us actually construct $u_n^1$ and $u_n^2$. We use a smooth cut-off function over $B_1$ and with support in $B_2$, say $\chi^1$, and set $\chi^2:=1-\chi^1$. Moreover, we define the $m$-quasi-continuous functions
\begin{alignat*}{2}
u_n^1=u_n\tau_{-y_n}\delta_{R_n}\chi^1, \qquad u_n^2=u_n\tau_{-y_n}\delta_{3R_n}\chi^2,
\end{alignat*}
where $\delta_r v$ is the dilation of a function $v$ by $r$, i.e. $\delta_rv(x):=v(x/r)$. Note that $\delta_r\chi^1$ and $\delta_{3r} \chi^2$, as well as their derivatives, are respectively supported in $B_{2r}$ and in $\R^d\setminus B_{3r}$. As a consequence, we have
$$
dist\left(\omega_{n,k}^1,\omega_{n,k}^2\right)\geq R_n\to\infty.
$$
Moreover, for $i=1,2$, the inclusion $
\omega_{n,k}^i\subseteq\omega_{n,k}$ follows from the definition of $u_n^1$ and $u_n^2$.
Now let us check that
\begin{align*}
& \left\|D^m u_n^1\right\|_{L^2}^2 -\left\|(\tau_{-y_n}\delta_{R_n}\chi^1)D^m u_n\right\|_{L^2}^2 \underset{n\to\infty}{=}O(1/R_n), \\
& \left\|D^m u_n^2\right\|_{L^2}^2 -\left\|(\tau_{-y_n}\delta_{3R_n}\chi^2)D^m u_n\right\|_{L^2}^2 \underset{n\to\infty}{=}O(1/R_n).
\end{align*}
For that purpose, take $\gamma\in\N^d$ a multi-index of length $m$, and recall that
$$
\partial^\gamma(fg)=\sum_{\beta\leq\gamma}\binom{\gamma}{\beta}\partial^\beta f\partial^{\gamma-\beta}g,
$$
from which we deduce
\begin{align*}
\left\|\partial^\gamma u_n^1\right\|_{L^2}^2 & =\left\|(\tau_{-y_n}\chi_{R_n}^1)\partial^\gamma u_n\right\|_{L^2}^2
 + \sum_{\substack{\beta\leq\gamma \\ \beta\neq0}}\binom{\gamma}{\beta}^2\frac{1}{R_n^{2|\beta|}}\left\|(\partial^{\gamma-\beta}u_n)(\tau_{-y_n}\delta_{R_n}\partial^\beta\chi^1)\right\|_{L^2}^2 \\
& + \sum_{\substack{\beta,\beta'\leq\gamma \\ \beta\neq\beta'}}\binom{\gamma}{\beta}\binom{\gamma}{\beta'}\frac{1}{R_n^{|\beta|+|\beta'|}}\left\langle(\partial^{\gamma-\beta}u_n)(\tau_{-y_n}\delta_{R_n}\partial^\beta\chi^1)\text{ , }(\partial^{\gamma-\beta'}u_n)(\tau_{-y_n}\delta_{R_n}\partial^{\beta'}\chi^1)\right\rangle_{L^2}.
\end{align*}
Since $(u_n)_n$ is bounded in $H^m(\R^d)$, and since $\chi^1\in W^{m,\infty}(\R^d)$, the two sums above are $O(1/R_n)$, and we obtain as expected
$$
\left\|\partial^\gamma u_n^1\right\|_{L^2}^2\underset{n\to\infty}{=}\left\|(\tau_{-y_n}\chi_{R_n}^1)\partial^\gamma u_n\right\|_{L^2}^2+O(1/R_n).
$$
An analogous discussion leads to an analogous result regarding $\left\|\partial^\gamma u_n^2\right\|_{L^2}^2$.
Now, using also the inequality $\left|\delta_r\chi^1\right|^2+\left|\delta_{3r}\chi^2\right|^2\leq1$, the previous justifies that for all $n\in\N$,
\begin{align*}
\|D^m u_n\|_{L^2}^2- & \|D^m u_n^1\|_{L^2}^2 - \|D^m u_n^2\|_{L^2}^2 \\
& \geq\|D^m u_n\|_{L^2}^2-\|(\tau_{-y_n}\delta_{R_n}\chi^1)D^m u_n\|_{L^2}^2-\|(\tau_{-y_n}\delta_{3R_n}\chi^2)D^m u_n\|_{L^2}^2 +O(1/R_n) \\
& \geq\|D^m u_n\|_{L^2}^2-\int_{\R^d} \tau_{-y_n}\left(\left|\delta_{R_n}\chi^1\right|^2+\left|\delta_{3R_n}\chi^2\right|^2\right)|D^m u_n|^2+O(1/R_n)\\
& \geq O(1/R_n).
\end{align*}
This means that $\liminf_n\left[\|D^m u_n\|_{L^2}^2-\|D^m u_n^1\|_{L^2}^2-\|D^m u_n^2\|_{L^2}^2\right]\geq 0$.
Moreover,
$$
\|u_n^1\|_{L^2}^2=\|\delta_{R_n}\chi^1\tau_{y_n} u_n\|_{L^2}^2\geq\|\tau_{y_n} u_n\|_{L^2(B_{R_n})}^2>\alpha-\epsilon_n\to\alpha.
$$
On the other hand, since $\delta_{R_n}\chi^1$ is supported in $B_{2R_n}$,
$$
\|u_n^1\|_{L^2}^2=\|\delta_{R_n}\chi^1\tau_{y_n} u_n\|_{L^2}^2\leq\|\tau_{y_n} u_n\|_{L^2(B_{2R_n})}^2<\alpha+\epsilon_n\to\alpha.
$$
Hence $\|u_n^1\|_{L^2}^2\to\alpha$. In the same spirit, regarding $u_n^2$,
$$
\|u_n^2\|_{L^2}^2=\|\delta_{3R_n}\chi^2\tau_{y_n} u_n\|_{L^2}^2\leq\|u_n\|_{L^2}^2-\|\tau_{y_n} u_n\|_{L^2(B_{3R_n})}^2\leq\|u_n\|_{L^2}^2-\alpha+\epsilon_n\to\lambda-\alpha,
$$
and
$$
\|u_n^2\|_{L^2}^2\geq \|u_n\|_{L^2}^2-\|\tau_{y_n}u_n\|_{L^2(B_{6R_n})}^2>\|u_n\|_{L^2}^2-\alpha-\epsilon_n\to\lambda-\alpha,
$$
showing that $\|u_n^2\|_{L^2}^2\to\lambda-\alpha$.
To conclude,
\begin{align*}
\|u_n-(u_n^1+u_n^2)\|_{L^2}^2 & =\int_{\R^d} \left(\delta_{3R_n}\chi^1-\delta_{R_n}\chi^1\right)^2|\tau_{y_n}u_n|^2 \\
& \leq \|\tau_{y_n}u_n\|_{L^2(B_{6R_n})}^2-\|\tau_{y_n}u_n\|_{L^2(B_{R_n})}^2\leq \alpha+\epsilon_n-\alpha+\epsilon_n\to 0.
\end{align*}
\end{proof}

\subsection{Proof of Theorem \ref{thm:existence d'une forme optimale}}\label{subsec:preuve du theoreme d'existence}

Equipped with the concentration-compactness principle, we can now turn to the proof of Theorem~\ref{thm:existence d'une forme optimale}, which claims the existence of a $1$-quasi-open set minimizing $\Gamma$ under volume constraint. Following \cite{bucur-ashbaugh}, the strategy is to apply the concentration-compactness procedure to the eigenfunctions of a minimizing sequence of sets, and to show that only compactness occurs. As a consequence, the eigenfunctions are shown to converge in $L^2$ towards a limit function. The final step is to identify the optimal set from the limit function, which is performed in the next proposition.

\begin{proposition}\label{prop:compacité}
Let $\Omega_n$ be a sequence of measurable sets of volume $c$, and $u_n\in H_0^m(\Omega_n)$ the corresponding eigenfunctions converging to some $u$ in $H^m(\R^d)$ weakly and in $L^2(\R^d)$ strongly. Then there exists a $1$-quasi-open set $\Omega$ of volume $|\Omega|\leq c$ such that
$$
\liminf_n\Gamma(\Omega_n)\geq\Gamma(\Omega).
$$ 
\end{proposition}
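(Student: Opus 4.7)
The natural candidate for the optimal set is the locus where $u$ or one of its derivatives of order $<m$ does not vanish. After normalising the eigenfunctions so that $\|u_n\|_{L^2}=1$ (which is WLOG), the strong $L^2$ convergence forces $\|u\|_{L^2}=1$, and in particular $u \not\equiv 0$. I would then define
$$
\Omega := \bigcup_{k=0}^{m-1}\{D^k u \neq 0\} = \bigcup_{\substack{\gamma \in \N^d \\ |\gamma| \leq m-1}}\{\partial^\gamma u \neq 0\}.
$$
For every such $\gamma$, the derivative $\partial^\gamma u$ belongs to $H^{m-|\gamma|}(\R^d) \subseteq H^1(\R^d)$, so the non-vanishing set of its $1$-quasi-continuous representative is $1$-quasi-open. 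As a countable union of $1$-quasi-open sets, $\Omega$ is itself $1$-quasi-open, and Lemma \ref{lemme:construction d'un quasi-ouvert} then ensures $u \in H_0^m(\Omega)$.

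The main subtle step is the volume bound $|\Omega| \leq c$. Since $(u_n)_n$ is bounded in $H^m(\R^d)$, iterating the Rellich--Kondrachov compact embedding $H^{j+1}(B_R) \hookrightarrow H^{j}(B_R)$ together with a diagonal extraction over an exhausting sequence of balls provides a subsequence along which $D^k u_n \to D^k u$ almost everywhere on $\R^d$, for every $0 \leq k \leq m-1$. On the other hand, the relation $u_n \in H_0^m(\Omega_n)$ combined with (\ref{eq:definition de H_0^m}) gives $D^k u_n = 0$ \mbox{$(m-k)$-quasi-everywhere}, in particular almost everywhere, on $\Omega_n^c$; hence $\{D^k u_n \neq 0\} \subseteq \Omega_n$ up to a null set. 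Consequently, at any $x$ where some $D^k u(x) \neq 0$ one has $D^k u_n(x) \neq 0$ for $n$ large enough, so $x \in \Omega_n$ eventually. This yields $\mathbb{1}_\Omega \leq \liminf_n \mathbb{1}_{\Omega_n}$ almost everywhere, and Fatou's lemma provides $|\Omega| \leq \liminf_n |\Omega_n| = c$.

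Finally, weak lower semicontinuity of the $L^2$ norm applied to $D^m u_n \rightharpoonup D^m u$ gives
$$
\|D^m u\|_{L^2(\R^d)}^2 \leq \liminf_n \|D^m u_n\|_{L^2(\R^d)}^2 = \liminf_n \Gamma(\Omega_n),
$$
the last equality coming from the normalisation $\|u_n\|_{L^2}=1$ and the eigenvalue identity. Since $u$ is a valid test function for the variational characterisation (\ref{eq:Gamma}) of $\Gamma(\Omega)$ and $\|u\|_{L^2}=1$, one concludes $\Gamma(\Omega) \leq \|D^m u\|_{L^2}^2 \leq \liminf_n \Gamma(\Omega_n)$, as claimed.
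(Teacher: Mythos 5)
Your proof is correct and follows essentially the same route as the paper: the same candidate set $\Omega=\bigcup_{k=0}^{m-1}\{D^ku\neq0\}$ together with Lemma \ref{lemme:construction d'un quasi-ouvert}, weak lower semicontinuity for the Rayleigh quotient, and an a.e.-convergence argument for the intermediate derivatives to get the volume bound. The only (cosmetic) differences are that the paper obtains the a.e. convergence of $D^ku_n$ via Gagliardo--Nirenberg interpolation in $L^2(\R^d)$ rather than Rellich on an exhaustion by balls, and phrases the measure estimate as $|\Omega\setminus\Omega_n|\to0$ rather than via $\1_\Omega\leq\liminf_n\1_{\Omega_n}$ and Fatou (your phrasing is, if anything, slightly cleaner since it avoids any a priori finiteness issue for $|\Omega|$).
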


To prove Proposition \ref{prop:compacité}, one builds the limit shape $\Omega$ as the set of points where $u$ or one of its derivatives up to order $m-1$ does not vanish. The conclusion is then straightforward, except the fact that $|\Omega|\leq c$. This actually follows from a pointwise convergence of the functions and from Egorov's Theorem, as shown in the lemma below.

\begin{lemme}\label{lemme:convergence pp et support}
Let $N\in\N^*$ and $U_n:\R^d\to\R^N$ form a sequence of measurable functions converging a.e. toward $U:\R^d\to\R^N$. Then \mbox{$|\{U\neq 0\}|\leq\liminf|\{U_n\neq 0\}|$}.
\end{lemme}

\begin{proof}
We assume without loss of generality that $c:=\liminf|\{U_n\neq0\}|$ is finite, and proceed by contradiction, pretending that $|\{U\neq0\}|>c$. Since
$$
\{U\neq0\}=\bigcup_{k\in\N^*}\{|U|\geq 1/k\},
$$
there exists $k\in\N^*$ such that $|\{|U|\geq 1/k\}|>c$. In the same fashion, there exists a large enough ball $B$ such that $|\{|U|\geq 1/k\}\cap B|>c$. By Egorov's Theorem, $B$ being of finite volume, there exists $\omega\subseteq B$ such that $|\{|U|\geq 1/k\}\cap \omega|>c$ and $U_n\to U$ uniformly over $\omega$. As a result for all large enough $n$, $|U_n|>0$ over $\{|U|\geq 1/k\}\cap \omega$, which is of volume greater than $c$. In other words, $\liminf|\{U_n\neq 0\}|>c$, a contradiction.
\end{proof}

\begin{proof}[Proof of Proposition \ref{prop:compacité}]
Thanks to weak $H^m$ and strong $L^2$ convergence of $(u_n)_n$, we immediately have
$$
\liminf_n\Gamma(\Omega_n)\geq\frac{\int_{\R^d}|D^m u|^2}{\int_{\R^d}u^2}.
$$
That's why one would like to define the optimal shape as the domain where $u$ or one of its derivatives does not vanish. Fortunately, the set \mbox{$\Omega:=\cup_{k=0}^{m-1}\{D^ku\neq0\}$} is $1$-quasi-open, and \mbox{$u\in H_0^m(\Omega)$} by Lemma \ref{lemme:construction d'un quasi-ouvert}. Therefore,
$$
\liminf_n\Gamma(\Omega_n)\geq\Gamma(\Omega).
$$
It remains only to show that $|\Omega|\leq c$. For that purpose, observe that not only $(u_n)_n$ but also $(D^k u_n)_n$ converges strongly in $L^2(\R^d)$ for all $0\leq k<m$, by interpolation (recall Gagliardo-Nirenberg inequalities). Moreover, up to a subsequence, we shall assume that the convergence holds almost everywhere. Applying Lemma \ref{lemme:convergence pp et support} to the functions $U_n:=(u_n,\nabla u_n,\dots,D^{m-1} u_n)$, we find that $|\Omega|\leq\liminf|\{U_n\neq 0\}|$, and thanks to the $1$-q.e. inclusion $\{U_n\neq 0\}\subseteq\Omega_n$, this shows as desired that $|\Omega|\leq c$.
\end{proof}

{\noindent
Let us now apply the concentration-compactness principle and proceed to the proof of Theorem~\ref{thm:existence d'une forme optimale}.
}

\begin{proof}[Proof of Theorem \ref{thm:existence d'une forme optimale}]
Consider a minimizing sequence $\Omega_n$ for problem (\ref{pb:Gamma}). Actually, thanks to Lemma \ref{lemme:inf egaux}, we can take the same minimizing sequence for problems (\ref{pb:Gamma}) and (\ref{eq:pb}), and hence choose $\Omega_n$ to be open. Moreover, we have $\inf_n\Gamma(\Omega_n)=:I>0$. We note $u_n\in H_0^m(\Omega_n)$ the eigenfunction on $\Omega_n$, and assume without loss of generality that $u_n$ is normalised in $L^2$. Then, by Lemma \ref{lemme:lions}, we extract a subsequence from $(u_n)_n$ such that three situations shall occur: compactness, vanishing, or dichotomy.

If we are in the vanishing situation, for any $0<R<\infty$,
$$
\lim_n\sup_{y\in\R^d}\int_{B_R}|\tau_yu_n|^2 = 0.
$$
In this case, if $(u_n)_n$ were bounded in $H^1$, then thanks to \cite[Lemma 3.6]{bucur-ashbaugh} there would exist a sequence of points $(y_n)_n\subseteq\R^d$ such that $\tau_{y_n}u_n$ admit no subsequence weakly converging to $0$ in $H^1$. Consequently, there would exist $\epsilon>0$ and a test function $\varphi$ with support in a ball of radius say $R>0$, such that for $n$ large enough,
$$
\epsilon\leq\left(\int_{\R^d}\tau_{y_n}u_n\varphi\right)^2\leq\sup_{y\in\R^d}\int_{B_R}|\tau_yu_n|^2\int_{B_R}\varphi^2.
$$
This would contradict the fact that $(u_n)_n$ vanishes. Therefore, we can extract a subsequence such that $\int_{\Omega_n}|\nabla u_n|^2\to\infty$. But thanks to inequality (\ref{eq:poincaré}), we obtain that $\int_{\Omega_n}|D^m u_n|^2\to\infty$, showing in turn that $\Gamma(\Omega_n)\to\infty$. To conclude, the vanishing situation cannot occur.

If the dichotomy occurs, there exists $\alpha\in (0,1)$ and sequences $(u_n^1)_n$, $(u_n^2)_n$ in $H^m(\R^d)$ with
\begin{align*}
&\|u_n-(u_n^1+u_n^2)\|_{L^2}\to 0, \\
&\|u_n^1\|_{L^2}^2\to\alpha,\qquad \|u_n^2\|_{L^2}^2\to \lambda-\alpha, \\
&\liminf\left[\|D^m u_n\|_{L^2}^2-\|D^m u_n^1\|_{L^2}^2-\|D^m u_n^2\|_{L^2}^2\right]\geq 0,\\
&dist\left(\omega_{n,k}^1,\omega_{n,k}^2\right)\to\infty, & \text{up to sets of null }(m-k)\text{-capacity},\\
&\omega_{n,k}^1,\omega_{n,k}^2\subseteq \omega_{n,k}\subseteq\Omega_n, & \text{up to sets of null }(m-k)\text{-capacity},
\end{align*}
where $\omega_{n,k}:=\bigcup_{j=0}^k\{D^j u_n\neq0\}$ and $\omega_{n,k}^{i}:=\bigcup_{j=0}^k\{D^j u_n^i\neq0\}$ for $i=1,2$. Then, thanks to the disjunction of $\{u_n^1\neq0\}$ and $\{u_n^2\neq0\}$, we have that for any $\epsilon>0$, for any $n$ large enough,
\begin{align*}
&\int_{\Omega_n} |u_n|^2\leq\int_{\Omega_n}|u_n^1|^2+\int_{\Omega_n}|u_n^2|^2+2\epsilon,
&\int_{\Omega_n} |D^m u_n|^2\geq\int_{\Omega_n}|D^m u_n^1|^2+\int_{\Omega_n}|D^m u_n^2|^2-2\epsilon.
\end{align*}
Therefore,
$$
\frac{\int_{\Omega_n} |D^m u_n|^2}{\int_{\Omega_n} u_n^2}\geq\frac{\int_{\Omega_n}|D^m u_n^1|^2+\int_{\Omega_n}|D^m u_n^2|^2-2\epsilon}{\int_{\Omega_n}|u_n^1|^2+\int_{\Omega_n}|u_n^2|^2+2\epsilon}.
$$
Note that $u_n^1,u_n^2\in H_0^m(\Omega_n)$, hence we have,
$$
\int_{\Omega_n} |u_n^i|^2\leq\frac{1}{\Gamma(\Omega_n)}\int_{\Omega_n} |D^mu_n^i|^2\leq I^{-1}\int_{\Omega_n} |D^mu_n^i|^2,\qquad i=1,2.
$$
As a consequence, both $\liminf_n\int_{\Omega_n}|D^mu_n^1|^2>0$ and $\liminf_n\int_{\Omega_n}|D^mu_n^2|^2>0$, so we may choose $\epsilon$ small enough for having $\int_{\Omega_n}|D^mu_n^i|^2-\epsilon>0$ for $i=1,2$ and any large $n$.  Then, using the algebraic property $\frac{a+b}{c+d}\geq\min\left(\frac{a}{c},\frac{b}{d}\right)$ for $a,b,c,d>0$, and passing to the limit $n\to\infty$, we obtain that
$$
\liminf_n\frac{\int_{\Omega_n} |D^m u_n|^2}{\int_{\Omega_n} u_n^2}\geq\min\left(\liminf_n\frac{\int_{\Omega_n} |D^m u_n^1|^2-\epsilon}{\int_{\Omega_n} |u_n^1|^2+\epsilon},\liminf_n\frac{\int_{\Omega_n} |D^m u_n^2|^2-\epsilon}{\int_{\Omega_n} |u_n^2|^2+\epsilon}\right).
$$
Shrinking $\epsilon\to 0$ and assuming that the minimum is achieved for the quotient involving $u_n^1$, we get
$$
\liminf_n\Gamma(\Omega_n)\geq\liminf_n\frac{\int_{\Omega_n} |D^m u_n^1|^2}{\int_{\Omega_n} |u_n^1|^2}.
$$
Now, for $i=1,2$, define $\omega_n^i:=\omega_{n,m-1}^i=\bigcup_{0\leq k<m}\{D^k u_n^i\neq0\}$. Observe that $\omega_n^i$ is $1$-quasi-open, and that, by Lemma \ref{lemme:construction d'un quasi-ouvert}, $u_n^i\in H_0^m(\omega_n^i)$ for $i=1,2$. In particular,
$$
\liminf_n\Gamma(\Omega_n)\geq\liminf_n\Gamma(\omega_n^1).
$$
Moreover, since $\omega_n^1$ and $\omega_n^2$ are $1$-q.e. disjoint (recall that $dist(\omega_n^1,\omega_n^2)\to\infty$) subsets of $\Omega_n$, we have
$$
|\omega_n^1|\leq|\Omega_n|-|\omega_n^2|.
$$
But $\liminf_n|\omega_n^2|> 0$. Indeed, since $u_n^2$ belongs to $H_0^m(\omega_n^2)$, we have thanks to relation (\ref{eq:homogénéité}) and to Lemma \ref{lemme:inf egaux},
$$
|\omega_n^2|^{\frac{2m}{d}}\frac{\int \left|D^m u_n^2\right|^2}{\int \left|u_n^2\right|^2}\geq|\omega_n^2|^{\frac{2m}{d}}\Gamma(\omega_n^2)\geq c^{\frac{2m}{d}}\inf\{\Gamma(\Omega):\Omega\text{ 1-quasi-open, }|\Omega|=c\}=c^{\frac{2m}{d}} I>0.
$$
Now because $u_n^2$ is uniformly bounded in $H^m(\R^d)$, and because $\|u_n^2\|_{L^2}^2\to1-\alpha\neq0$, we end up with $$
\liminf_n|\omega_n^2|> 0.
$$
As a result, we have $\limsup_n|\omega_n^1|=:c'<c$. Consequently, $\Omega_n':=\left(c/|\omega_n^1|\right)^{\frac{1}{d}}\omega_n^1$ is an admissible sequence made of $1$-quasi-open sets of volume $c$, hence the following contradiction
$$
\liminf_n\Gamma(\omega_n^1)\leq\liminf_n\Gamma(\Omega_n)\leq\liminf_n\Gamma(\Omega_n')=\left(c'/c\right)^{\frac{2m}{d}}\liminf_n\Gamma(\omega_n^1).
$$
We have shown that neither vanishing nor dichotomy can occur. The only remaining possibility is compactness. This means that there exists a sequence of points $(y_n)_n\subset\R^d$ such that for all $\epsilon>0$, there exists some radius $R>0$ such that, for any $n$ large enough,
$$
\|\tau_{y_n}u_n\|_{L^2(B_R)}\geq1-\epsilon.
$$
First, for readability, considering $\Omega_n+y_n$ instead of $\Omega_n$, it is possible to assume that $y_n=0$. Now, note that $(u_n)_n$ is bounded in $L^2(\R^d)$, hence converges weakly to some $u\in L^2(\R^d)$, up to a subsequence. We will see that the convergence is also $L^2$ strong. Indeed, let $(R_k)_k$ be an increasing sequence of radiuses such that, for each $k\in\N^*$, for each large $n\in\N,\|u_n\|_{L^2(B_{R_k})}\geq1-1/k$.
Note that for each $k\in\N^*$, the sequence $(u_n)_n$ is bounded in $H^m(B_{R_k})$, hence we may extract diagonaly a subsequence converging strongly in each $L^2(B_{R_k})$ to $u$. As a consequence, $\|u\|_{L^2}\geq 1$ since for any $k\in\N^*$, \mbox{$\|u\|_{L^2}\geq\|u\|_{L^2(B_{R_k})}=\lim_n\|u_n\|_{L^2(B_{R_k})}\geq 1-1/k$}. On the other hand, by weak convergence $\|u\|_{L^2}\leq1$. Therefore, in $L^2(\R^d)$, the convergence holds simultaneously weakly and in norm, hence strongly.

Weak $H^m$ and strong $L^2$ convergence of $(u_n)_n$ towards $u$, together with Proposition \ref{prop:compacité} show that it is possible to build a $1$-quasi-open set $\Omega$, of volume lower or equal than $c$ such that $\liminf_n\Gamma(\Omega_n)\geq\Gamma(\Omega)$. Therefore, $\Omega$, or more precisely its dilation fitting the volume constraint, solves (\ref{pb:Gamma}).
\end{proof}

\section{Regularity of an optimal shape}\label{sec:regularite}

\subsection{Quasi-minimizers}\label{subsec:quasi-min}

In optimization, a common strategy for getting regularity is to resort to the notion of local quasi-minimizers\footnote{We mention that as far as we know, there is no direct link between capacity and quasi-minimizers. In particular, the prefix \enquote{quasi} of quasi-minimizers does not refer to any capacity.} as defined in \cite{bucur-mazzoleni-pratelli}, based on the ideas of \cite{briancon-hayouni-pierre,alt-caffarelli}. In this paragraph, we recall the notion of local quasi-minimizers of a functional defined on $H^m(\R^d)$.

\begin{definition}\label{def:quasi-min}
Let $J:H^m(\R^d)\to\R$ be a functional and $u\in H^m(\R^d)$. We say that $u$ is a local quasi-minimizer for $J$ if there exists $C,\alpha,r_0>0$ such that for all $x_0\in\R^d$ and $0< r<r_0$,
\begin{equation}\label{eq:def quasi-min}
J(u)\leq J(v)+Cr^d,
\end{equation}
for all $v\in\mathcal{A}_{r,\alpha}:=\{v\in H^m(\R^d):u-v\in H_0^m(B_r(x_0)), \int_{B_r(x_0)}|D^m (u-v)|^2\leq\alpha\}$.
\end{definition}

\begin{remarque}
The constants $C$, $\alpha$, and $r_0$ may depend not only on $J$, but also on $u$.
\end{remarque}

For the archetypal case of Dirichlet energy functionals, a central property of quasi-minimizers is given in the next lemma.

\begin{lemme}\label{lemme:quasi-minimalité}
Let $f\in L^2(\R^d)$ and $J_f:H^m(\R^d)\to\R$ defined by
$$
J_f(v)=\frac{1}{2}\int_{\R^d}|D^mv|^2-\int_{\R^d}fv.
$$
Let $u\in H^m(\R^d)$ be a local quasi-minimizer for $J_f$. Then, there exists $C,r_0>0$ (depending upon $d$,$m$,$f$ and $u$) such that for all $x_0\in\R^d$ and $0< r<r_0$,
\begin{equation}\label{eq:propriété des quasi-min}
\langle (-\Delta)^mu-f,\varphi\rangle_{H^{-m},H_0^m}\leq Cr^{d/2}\left(\int_{B_r(x_0)}|D^m\varphi|^2\right)^{1/2}, \qquad \forall \varphi\in H_0^m(B_r(x_0)).
\end{equation}
\end{lemme}

\begin{remarque}
In the setting of Lemma \ref{lemme:quasi-minimalité}, (\ref{eq:propriété des quasi-min}) is not only implied but is actually equivalent to (\ref{eq:def quasi-min}) (see Remark 3.3 and Remark 3.4 of \cite{bucur-mazzoleni-pratelli}).
\end{remarque}

\begin{proof}
Let $\psi\in H_0^m(B_r(x_0))$, such that $\int_{B_r(x_0)}|D^m \psi|^2\leq\alpha$. Because $\psi\in H_0^m(B_r(x_0))$, we may integrate by parts (recall (\ref{eq:ipp})) to obtain
$$
\langle (-\Delta)^mu-f,\psi\rangle_{H^{-m},H_0^m}=\int_{\R^d}D^mu:D^m\psi-\int_{\R^d}f\psi.
$$
On the other hand, $v:=u-\psi\in \mathcal{A}_{r,\alpha}$, hence from the quasi-minimality of $u$ we get
$$
J_f(u)\leq J_f(v)+Cr^d=J_f(u)+J_f(-\psi)-\int_{\R^d}D^mu:D^m\psi +Cr^d,
$$
from which we deduce
$$
\int_{\R^d}D^mu:D^m\psi-\int_{\R^d}f\psi\leq \frac{1}{2}\int_{\R^d}|D^m\psi|^2+Cr^d.
$$
Now if $\varphi\in H_0^m(B_r(x_0))$, set $\psi=\alpha^{1/2} \left(\frac{r}{r_0}\right)^{d/2}\|D^m \varphi\|_{L^2(B_r(x_0))}^{-1}\varphi$ so that $\psi\in H_0^m(B_r(x_0))$ and $\int_{B_r(x_0)}|D^m \psi|^2\leq\alpha$. The previous discussion shows that
$$
\alpha^{1/2} \left(\frac{r}{r_0}\right)^{d/2}\|D^m \varphi\|_{L^2(B_r(x_0))}^{-1}\langle (-\Delta)^mu-f,\varphi\rangle_{H^{-m},H_0^m}\leq\frac{\alpha}{2}\left(\frac{r}{r_0}\right)^{d}+Cr^d.
$$
In other words, there exists $C'>0$ such that for all $r<r_0$,
$$
\langle (-\Delta)^mu-f,\varphi\rangle_{H^{-m},H_0^m}\leq C'r^{d/2}\left(\int_{B_r(x_0)}|D^m\varphi|^2\right)^{1/2},
$$
which concludes.
\end{proof}

\subsection{Proof of Theorem \ref{thm:regularite fonction propre}}\label{subsec:preuve du theoreme de regularite}

To prove Theorem \ref{thm:regularite fonction propre}, the first step is to gain regularity on an optimal eigenfunction. For that purpose, the concept of local quasi-minimizer provides an approriate framework. Indeed, the following result gives the regularity of local quasi-minimizers of Dirichlet energy functionals. The regularity obtained is not sharp in general, as one shall see in \cite[Theorem 3.6]{bucur-mazzoleni-pratelli}.

\begin{theoreme}\label{thm:regularite quasi-min}
Assume $d\geq2$. Let $f\in L^2\cap L_{loc}^p(\R^d)$, $p\geq\frac{d}{m}$ and $J_f:H^m(\R^d)\to\R$ defined by
\begin{equation}\label{eq:fonctionnelle energetique}
J_f(v)=\frac{1}{2}\int_{\R^d}|D^mv|^2-\int_{\R^d}fv.
\end{equation}
Let $u\in H^m(\R^d)$ be a local quasi-minimizer for $J_f$. Then, $u\in C^{m-1,\alpha}(\R^d)$ for all $0<\alpha<1$.
\end{theoreme}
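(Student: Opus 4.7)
The plan is to apply Morrey's integral characterisation of Hölder continuity \cite[Chapter III, Theorem 1.1]{giaquinta} componentwise to $D^{m-1} u$. This reduces the statement to establishing the uniform Morrey-type decay
\[ \int_{B_r(x_0)} |D^m u|^2 \leq C_\epsilon \, r^{d - \epsilon}, \qquad \forall x_0 \in \R^d, \quad 0 < r < r_0, \]
for every prescribed $\epsilon > 0$, which yields $D^{m-1} u \in C^{0, 1 - \epsilon/2}(\R^d)$ and therefore $u \in C^{m-1, \alpha}(\R^d)$ for every $0 < \alpha < 1$ upon letting $\epsilon \to 0$. To produce this decay I will compare $u$ with its polyharmonic replacement on each ball: given $x_0$ and $0 < r < r_0$, let $v$ be the unique function satisfying $(-\Delta)^m v = 0$ in $B_r(x_0)$ and $u - v \in H_0^m(B_r(x_0))$ (obtained via Lax--Milgram applied to the bilinear form $\int D^m \cdot : D^m \cdot$ on $H_0^m(B_r(x_0))$), and set $w := u - v \in H_0^m(B_r(x_0))$. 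Formula (\ref{eq:ipp}) together with $(-\Delta)^m v = 0$ furnishes the orthogonality $\int_{B_r(x_0)} D^m v : D^m w = 0$, hence the Pythagorean decomposition
\[ \phi(r) := \int_{B_r(x_0)} |D^m u|^2 = \int_{B_r(x_0)} |D^m v|^2 + \int_{B_r(x_0)} |D^m w|^2, \]
along with the Dirichlet-energy minimality $\int_{B_r(x_0)} |D^m v|^2 \leq \phi(r)$.

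Next I would estimate each piece separately. For $w$, applying Lemma \ref{lemme:quasi-minimalité} with test function $\varphi = w$ and using $\int D^m u : D^m w = \int |D^m w|^2$ leads to
\[ \int_{B_r(x_0)} |D^m w|^2 \leq C r^{d/2} \|D^m w\|_{L^2(B_r(x_0))} + \Bigl| \int_{B_r(x_0)} f w \Bigr|. \]
The source term is handled via Hölder and the scaled Sobolev--Poincaré inequality in $H_0^m(B_r(x_0))$: setting $p' := p/(p-1)$,
\[ \Bigl| \int_{B_r(x_0)} f w \Bigr| \leq \|f\|_{L^p(B_r(x_0))} \|w\|_{L^{p'}(B_r(x_0))} \leq C \, r^{m + d/2 - d/p} \|D^m w\|_{L^2(B_r(x_0))}, \]
with $C$ absorbing a local $L^p$ bound for $f$. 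The hypothesis $p \geq d/m$ is precisely what forces $m + d/2 - d/p \geq d/2$, so the source contribution is absorbed into the quasi-minimality remainder and yields $\int_{B_r(x_0)} |D^m w|^2 \leq C r^d$. For $v$, every partial derivative $\partial^\gamma v$ still solves $(-\Delta)^m \partial^\gamma v = 0$ in $B_r(x_0)$, hence is polyharmonic and smooth; the standard mean-value/interior $L^2$-decay for polyharmonic functions then gives
\[ \int_{B_\rho(x_0)} |D^m v|^2 \leq c_0 \, (\rho/r)^d \int_{B_r(x_0)} |D^m v|^2, \qquad 0 < \rho \leq r. \]

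Combining both estimates with the Pythagorean decomposition yields the Campanato-type bound $\phi(\rho) \leq c_0 (\rho/r)^d \phi(r) + C r^d$ for all $0 < \rho \leq r < r_0$. Writing $r^d \leq r_0^\epsilon r^{d - \epsilon}$ places this in the strict regime $d > d - \epsilon$ of the standard iteration lemma \cite[Chapter III, Lemma 2.1]{giaquinta}, which delivers $\phi(r) \leq C_\epsilon r^{d - \epsilon}$ uniformly in $x_0$, completing the proof. The main technical point I expect to be delicate is the Sobolev--Poincaré exponent matching producing the $r^{d/2}$ bound on the $f$-perturbation: this is sharp precisely at $p = d/m$, and is the mechanism by which, through Theorem \ref{thm:regularite fonction propre}, the dimensional restriction $d \leq 4m$ enters the main theorem. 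A secondary, benign subtlety is the borderline Campanato iteration, in which the decay exponent of the homogeneous part coincides with that of the perturbation and thereby forces the arbitrarily small $\epsilon$-loss in the final Hölder exponent.
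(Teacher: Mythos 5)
Your proposal is correct, and its overall architecture (reduction to the Morrey decay $\int_{B_r}|D^mu|^2\leq C_\epsilon r^{d-\epsilon}$, comparison with a replacement on balls, the quasi-minimality estimate of Lemma \ref{lemme:quasi-minimalité}, interior decay for polyharmonic functions, and the Campanato iteration lemma) coincides with the paper's. The one genuine difference is the decomposition: you take $v$ to be the \emph{polyharmonic} replacement of $u$ on $B_r(x_0)$ and exploit the resulting orthogonality $\int D^mv:D^mw=0$, so the source term $\int fw$ survives and must be absorbed by hand via H\"older and the scaled Sobolev--Poincar\'e inequality in $H_0^m(B_r(x_0))$ (your exponent bookkeeping $m+d/2-d/p\geq d/2$ is exactly where $p\geq d/m$ enters, and the embedding $H_0^m(B_1)\hookrightarrow L^{p'}(B_1)$ does hold since $p'\leq d/(d-m)$). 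The paper instead takes $v_{x_0,R}$ solving the \emph{inhomogeneous} equation $(-\Delta)^mv_{x_0,R}=f$ with boundary data $u$, so that $\int|D^m(u-v_{x_0,R})|^2=\langle(-\Delta)^mu-f,u-v_{x_0,R}\rangle$ is controlled directly by quasi-minimality, and then splits $v_{x_0,R}=y_{x_0,R}+z_{x_0,R}$ into a particular solution in $H_0^m$ (estimated by Cauchy--Schwarz, Poincar\'e and H\"older, which is where $p\geq d/m$ enters there) and a polyharmonic part (estimated by the Giaquinta--Modica decay you also invoke). The two routes are equivalent in substance --- your $w$ is the paper's $(u-v_{x_0,R})+y_{x_0,R}$ --- and yield the same borderline iteration with the $\epsilon$-loss; your version is marginally leaner (two pieces instead of three), at the cost of needing the Sobolev--Poincar\'e scaling rather than plain Cauchy--Schwarz. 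One shared caveat, present in the paper as well: for the constant in \eqref{eq:morrey} to be uniform in $x_0\in\R^d$ one implicitly uses a uniform local $L^p$ bound on $f$, which $L^p_{loc}$ alone does not provide but which is automatic in the application $f=\Gamma(\Omega)u$.
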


\begin{proof}
We proceed in a similar way than \cite[Theorem 3 to Theorem 4]{stollenwerk}. More precisely, our goal is to prove that for all $\alpha\in(0,1)$, there exists $R_0,C>0$ (depending on $d$,$m$,$p$,$f$,$u$, and $\alpha$) such that, for all $x_0\in\R^d$ and $0<r<R_0$,
\begin{equation}\label{eq:morrey}
\int_{B_r(x_0)}|D^m u|^2\leq Cr^{d-2+2\alpha}.
\end{equation}
Indeed, due to Morrey's Theorem (see \cite[Chapter III, Theorem 1.1]{giaquinta}), this yields that $D^{m-1}u\in C^{0,\alpha}$ for all $\alpha\in(0,1)$. In order to show (\ref{eq:morrey}), we let $x_0\in\R^d$, $R>0$ and define $v_{x_0,R}$ such that $u-v_{x_0,R}\in H_0^m(B_R(x_0))$ and $(-\Delta)^mv_{x_0,R}=f$ in $B_R(x_0)$. Such a function may be constructed by minimizing $J_f$ over the set $u-H_0^m(B_R(x_0))$. Now we will estimate the two terms in the right-hand side of
\begin{equation}\label{eq:D^mu sur Br}
\int_{B_r(x_0)}|D^m u|^2\leq 2\int_{B_r(x_0)}|D^m (u-v_{x_0,R})|^2+2\int_{B_r(x_0)}|D^m v_{x_0,R}|^2.
\end{equation}
We will control the first term in the larger ball $B_R(x_0)$ proceeding as in the proof of \cite[inequality (A.8)]{bucur-mazzoleni-pratelli}. Indeed, integrating by parts, and then using the definition of $v_{x_0,R}$, we get
\begin{align}\label{eq:intégrale D^m(u-v) sur B_R}
\int_{B_R(x_0)}|D^m (u-v_{x_0,R})|^2 & =\langle (-\Delta)^m (u-v_{x_0,R}),(u-v_{x_0,R})\rangle_{H^{-m},H_0^m} \nonumber\\
& =\langle (-\Delta)^m u-f,(u-v_{x_0,R})\rangle_{H^{-m},H_0^m}.
\end{align}
The quasi-minimality of $u$ and Lemma \ref{lemme:quasi-minimalité} show that there exists $C,R_0>0$ such that, as long as $R<R_0$,
\begin{equation}\label{eq:quasi-min de u}
\langle (-\Delta)^m u-f,(u-v_{x_0,R})\rangle_{H^{-m},H_0^m}\leq CR^{d/2}\left(\int_{B_R(x_0)}|D^m (u-v_{x_0,R})|^2\right)^{1/2}.
\end{equation}
Finally, combining (\ref{eq:intégrale D^m(u-v) sur B_R}) and (\ref{eq:quasi-min de u}), we obtain (still for all $R<R_0$),
\begin{equation}\label{eq:D^m u-v}
\int_{B_R(x_0)}|D^m (u-v_{x_0,R})|^2\leq CR^{d}.
\end{equation}
For the second term in (\ref{eq:D^mu sur Br}), let us decompose $v_{x_0,R}$ into $y_{x_0,R}$ and $z_{x_0,R}$ , which are defined in the following way: $y_{x_0,R}$ is the function in $H_0^m(B_R(x_0))$ such that $(-\Delta)^my_{x_0,R}=f$; and $z_{x_0,R}$ is the $m$-polyharmonic function such that $v_{x_0,R}-z_{x_0,R}\in H_0^m(B_R(x_0))$. Notice that, using the definition of $y_{x_0,R}$, the Cauchy-Schwarz inequality, and relation (\ref{eq:homogénéité}), we find
\begin{equation*}
\begin{split}
\int_{B_R(x_0)}|D^my_{x_0,R}|^2=\int_{B_R(x_0)} fy_{x_0,R} & \leq \|f\|_{L^2(B_R(x_0))}\|y_{x_0,R}\|_{L^2(B_R(x_0))} \\
& \leq CR^{m}\|f\|_{L^2(B_R(x_0))}\|D^my_{x_0,R}\|_{L^2(B_R(x_0))}.
\end{split}
\end{equation*}
In particular, using Hölder's inequality (with $q=p/2$) and the fact that $f\in L^p(B_{R_0}(x_0))$,
\begin{equation}\label{eq:D^m y}
\int_{B_R(x_0)}|D^my_{x_0,R}|^2\leq CR^{2m}\int_{B_R(x_0)}f^2\leq CR^{2m+d(1-2/p)}\leq CR^d,
\end{equation}
the last inequality coming from $m-d/p\geq 0$ and $R\leq R_0$. Observe that Hölder's inequality cannot be applied when $p\leq 2$, but in this case $d\leq 2m$, hence (\ref{eq:D^m y}) follows from $f\in L^2(\R^d)$. On the other hand, using inequality (3.2) of \cite{giaquinta-modica} for polyharmonic functions (see also \cite[Lemma 2]{stollenwerk}), we obtain the existence of $C>0$ such that, for all $0<r<R<R_0$,
$$
\int_{B_r(x_0)}|D^mz_{x_0,R}|^2\leq C\left(\frac{r}{R}\right)^d\int_{B_R(x_0)}|D^mz_{x_0,R}|^2.
$$
Because $z_{x_0,R}$ minimizes $J_0$ amongst functions $z$ such that $v_{x_0,R}-z\in H_0^m(B_R(x_0))$, the previous turns into the next
\begin{equation}\label{eq:D^m z}
\int_{B_r(x_0)}|D^mz_{x_0,R}|^2\leq C\left(\frac{r}{R}\right)^d\int_{B_R(x_0)}|D^mu|^2.
\end{equation}
Combining (\ref{eq:D^m y}) and (\ref{eq:D^m z}), we find some $C>0$ such that for all $0<r<R<R_0$,
\begin{equation}\label{eq:D^m v}
\int_{B_r(x_0)}|D^mv_{x_0,R}|^2\leq C\left(R^d+\left(\frac{r}{R}\right)^d\int_{B_R(x_0)}|D^mu|^2\right).
\end{equation}
Thanks to (\ref{eq:D^m u-v}) and (\ref{eq:D^m v}), we can now conclude that for some $C>0$ and for all $0<r<R<R_0$,
\begin{equation}
\int_{B_r(x_0)}|D^m u|^2\leq C\left[R^d+\left(\frac{r}{R}\right)^d\int_{B_R(x_0)}|D^mu|^2\right].
\end{equation}
In view of \cite[Chapter III, Lemma 2.1, p.86]{giaquinta} applied to $\phi(r)=\int_{B_r(x_0)}|D^m u|^2$ with $\alpha=d$ and $\beta=d-\gamma$ for any $0<\gamma<d$, we conclude that, for all $0<r<R<R_0$,
$$
\int_{B_r(x_0)}|D^m u|^2\leq C\left(\frac{r}{R}\right)^{d-\gamma}\left[\int_{B_{R}(x_0)}|D^mu|^2+R^{d-\gamma}\right],
$$
where $C$ depends as usual on $d$,$m$,$p$,$f$,$u$, but now also on $\gamma$. Fixing for instance $R=R_0/2$, we end up with
$$
\int_{B_r(x_0)}|D^m u|^2\leq Cr^{d-\gamma},
$$
for all $r<R_0/4$, which is exactly (\ref{eq:morrey}) as long as we set $\gamma=2-2\alpha<2\leq d$ where $0<\alpha<1$.
\end{proof}

Thanks to Theorem \ref{thm:regularite quasi-min}, it remains only to show that an optimal eigenfunction is a local quasi-minimizer, which is performed below and allows to conclude the proof of Theorem \ref{thm:regularite fonction propre}.

\begin{proof}[Proof of Theorem \ref{thm:regularite fonction propre}]\label{preuve:regularite fonction propre}
To begin with, we explain the last part of Theorem \ref{thm:regularite fonction propre}, which asserts that, as long as an eigenfunction $u$ associated with $\Gamma(\Omega)$ is $C^{m-1,\alpha}(\R^d)$, we shall construct an open set $\tilde{\Omega}$ which is optimal. Indeed, we proceed in a similar fashion to the proof of \cite[Theorem~6.1]{bucur-mazzoleni-pratelli}, and define
$$
\tilde{\Omega}:=\bigcup_{0\leq k<m}\{D^k u\neq0\}.
$$
Then, $\tilde{\Omega}$ is open and, by Lemma \ref{lemme:construction d'un quasi-ouvert}, $u\in H_0^m(\Omega)$. In particular, $\Gamma(\tilde{\Omega})\leq\Gamma(\Omega)$. It remains to check that $\tilde{\Omega}$ satisfies the volume constraint. But since $u\in H_0^m(\Omega)$, we know that, for each $k<m$, $\{D^k u\neq0\}$ is included in $\Omega$ up to a set of null $(m-k)$-capacity, and hence $\tilde{\Omega}\subseteq \Omega$ up to a set of null $1$-capacity. As a consequence, $|\tilde{\Omega}|\leq|\Omega|$, and $\tilde{\Omega}$ satisfies the volume constraint up to a dilation, which concludes.

Let us now turn to the proof of the fact that $u$ is $C^{m-1,\alpha}(\R^d)$. In view of Theorem \ref{thm:regularite quasi-min}, it is enough to show that $u$ is a local quasi-minimizer of the functional $J_f$ defined in (\ref{eq:fonctionnelle energetique}) for some $f\in L^2\cap L^{p}(\R^d)$ with $p\geq d/m$. For that purpose, we introduce the concept of quasi-minimizer of a shape functional: we say that $\Omega$ is a local quasi-minimizer of the functional $\Gamma$ if there exists $C,r_0>0$ such that, for all $x_0\in\R^d$ and $0<r<r_0$, and for any measurable set $\Omega'$,
$$
\Omega'\Delta \Omega\subseteq B_r(x_0)\quad\Longrightarrow \quad\Gamma(\Omega)\leq \Gamma(\Omega')+Cr^d,
$$
where $\Omega'\Delta \Omega$ is the symmetric difference between $\Omega$ and $\Omega'$. As shown in \cite[Lemma 4.6]{bucur-mazzoleni-pratelli}, the local quasi-minimality of $u$ with respect to $J_f$ shall be deduced from the local quasi-minimality of $\Omega$ with respect to $\Gamma$. Indeed, without loss of generality, we assume $u$ to be $L^2$ normalised. Then, as long as $\Omega$ is a local quasi-minimizer, for any $v\in H^m(\R^d)$ with $v-u\in H_0^m(B_r(x_0))$ and $r\in(0,r_0)$, we have that $v\in H_0^m(\Omega\cup B_r(x_0))$, hence, using the variational definition of $\Gamma(\Omega\cup B_r(x_0))$,
\begin{equation}\label{eq:quasi-min de forme}
\Gamma(\Omega)\leq \Gamma(\Omega\cup B_r(x_0))+Cr^d\leq \frac{\int_{\R^d}|D^mv|^2}{\int_{\R^d}v^2}+Cr^d,
\end{equation}
Now let $\alpha>0$ and take an arbitrary $v\in\mathcal{A}_{r,\alpha}$ (recall Definition \ref{def:quasi-min}). Because $u$ is normalised, $r<r_0$ and $\int_{B_r(x_0)}|D^m(u-v)|^2<\alpha$ we have
$$
\int_{\R^d} v^2\leq2\int_{\R^d} u^2+2\int_{\R^d} (u-v)^2\leq2\int_{\R^d} u^2+\frac{2\alpha}{\Gamma(B_{r_0}(x_0))}\leq C,
$$
where $C$ does not depend on $v$. Hence after multiplying (\ref{eq:quasi-min de forme}) by $\int_{\R^d}v^2$ we obtain
$$
\Gamma(\Omega)\int_{\R^d}v^2\leq\int_{\R^d}|D^mv|^2+Cr^d.
$$
Using the inequality $v^2\geq2uv-u^2$, we find
$$
-\Gamma(\Omega)\int_{\R^d}u^2\leq \int_{\R^d}|D^mv|^2-2\Gamma(\Omega)\int_{\R^d}uv+ Cr^d.
$$
Observe that this reads $J_f(u)\leq J_f(v)+Cr^d$ for $f=\Gamma(\Omega)u$. As a result, $u$ is a local quasi-minimizer of $J_f$ with $f=\Gamma(\Omega)u$. At this point, we need to investigate the summability of $f$. Remark that thanks to Sobolev injections, $u\in L^q(\R^d)$ for $q=2d/(d-2m)$ if $2m<d$, whereas $u\in L^r(\R^d)$ for all $r\in[2,\infty)$ if $2m\geq d$. The restriction $d\leq 4m$ yields $q\geq d/m$. Consequently, in any case $f\in L^p(\R^d)$ for some $p\geq d/m$, and thanks to Theorem \ref{thm:regularite quasi-min} $u$ is then $C^{m-1,\alpha}(\R^d)$. Thus, it remains only to show the local quasi-minimality of $\Omega$ with respect to $\Gamma$. As mentioned in the second point of Remark 5.2 in \cite{bucur-mazzoleni-pratelli}, this will follow if we show that $\Omega$ is a local shape supersolution for $\Gamma+\Lambda|\cdot|$ for some $\Lambda>0$, or, in other words, that there is some $r_0>0$ such that for all $x_0\in\R^d$ and all measurable set $\Omega\subseteq \Omega'\subseteq \Omega\cup B_{r_0}(x_0)$,
$$
\Gamma(\Omega)+\Lambda|\Omega|\leq\Gamma(\Omega')+\Lambda|\Omega'|.
$$
Indeed, if $\Omega$ is such a shape super-solution, for any $x_0\in\R^d$, any $0<r<r_0$, and any measurable set $\Omega'$ with $\Omega'\Delta \Omega\subseteq B_r(x_0)$, we have that $\Omega\subseteq \Omega'\cup\Omega\subseteq \Omega\cup B_{r_0}(x_0)$, hence
$$
\Gamma(\Omega')\geq\Gamma(\Omega'\cup\Omega)\geq\Gamma(\Omega)+\Lambda|\Omega|-\Lambda|\Omega'\cup\Omega|\geq\Gamma(\Omega)-\Lambda|\Omega'\setminus\Omega|,
$$
which yields as expected
$$
\Gamma(\Omega)\leq\Gamma(\Omega')+\Lambda|B_r(x_0)|.
$$
To conclude, let us show that $\Omega$ is a shape super-solution. This is a consequence of the optimality of $\Omega$, as explained in the proof of \cite[Theorem 6.1]{bucur-mazzoleni-pratelli}, since for any $\Omega\subseteq \Omega'\subseteq \Omega\cup B_{r_0}(x_0)$, setting $t=\left(|\Omega'|/|\Omega|\right)^{\frac{1}{d}}$ and recalling (\ref{eq:homogénéité}),
\begin{align*}
\Gamma(\Omega)\leq\Gamma(\Omega'/t)\leq t^{2m}\Gamma(\Omega') &\leq \Gamma(\Omega')+(t^{2m}-1)\Gamma(\Omega')\\
&\leq \Gamma(\Omega')+C_{r_0}(t^d-1)\Gamma(\Omega)\leq \Gamma(\Omega')+\Lambda(|\Omega'\setminus\Omega|),
\end{align*}
with $\Lambda=C_{r_0}\Gamma(\Omega)/|\Omega|$. Note that, besides L'Hôpital's rule, we used the fact that $t\geq1$ is bounded when $\Omega'\subseteq \Omega\cup B_{r_0}(x_0)$ in order to get the bound $t^{2m}-1\leq C_{r_0}(t^d-1)$.
\end{proof}

\section{Conclusion}\label{sec:conclusion}

Fortified by Theorem \ref{thm:existence d'une forme optimale} and Theorem \ref{thm:regularite fonction propre}, we shall propose a proof for Theorem \ref{thm:existence et regularite}.

\begin{proof}[Proof of Theorem \ref{thm:existence et regularite}]
Let $2\leq d\leq 4m$. Due to Theorem \ref{thm:existence d'une forme optimale}, there exists a $1$-quasi-open set solving (\ref{pb:Gamma}). From Lemma \ref{lemme:inf egaux} we deduce that this set also solves (\ref{eq:pb mesurable}). Then, as desired, Theorem \ref{thm:regularite fonction propre} produces an open set $\Omega$ solving (\ref{eq:pb}). Moreover, $\Omega$ also solves (\ref{eq:pb mesurable}) by Lemma \ref{lemme:inf egaux}. Applying again Theorem \ref{thm:regularite fonction propre}, we conclude that any first eigenfunction over $\Omega$ is $C^{m-1,\alpha}(\R^d)$.
\end{proof}

Theorem \ref{thm:existence et regularite} is of course an advance in the context of shape optimization. However, as stated in Remark \ref{rmq:existence et regularite}, it has several limits besides the dimensional restriction. Indeed, in optimization, existence results are often used as a preliminary step in the derivation of optimality conditions in view of identifying an optimizer. Nevertheless, in shape optimization, some regularity is frequently needed on the optimal shape to apply the theory of shape derivatives in order to derive optimality conditions. See \cite[Chapter 5]{henrot-pierre} for a general theory on shape derivatives. See for instance \cite{buoso-lamberti13,buoso-lamberti15,ortega-zuazua,mohr}, \cite[section 5]{antunes-buoso-freitas}, and \cite[section 3]{leylekian} for results on shape derivatives involving fourth and higher order operators. Finally, see \cite{bucur-ashbaugh} for an example where shape derivatives are performed without regularity assumption on the optimal shape.

Unfortunately, as mentioned in Remark \ref{rmq:existence et regularite}, the optimal shape of Theorem \ref{thm:existence et regularite} enjoys only little regularity. A first step towards upgrading it would be to prove boundedness. Some results are known in this direction, but only in the case of second order operators, up to our knowledge (see \cite{bucur2012,mazzoleni}). After boundedness, the question of finer regularity of the optimal set shall be tackled. See \cite{lamboley-pierre} for a review on this subject. The main results known are covered by some works of Lamboley et al. \cite{briancon-lamboley,dephilippis-lamboley-pierre-velichkov,lamboley-prunier}. However, as far as we know, those results hold only in special situations (for instance $m=1$ with \cite{briancon-lamboley}), or when ancillary constraints allow to gain regularity (see \cite{dephilippis-lamboley-pierre-velichkov,lamboley-prunier}). To conclude let us mention that \cite{stollenwerk_clamped} also contains some hints on this topic in the case $m=2$ (see for instance \cite[Lemma 9]{stollenwerk_clamped}).


\appendix







\end{document}